\theoremstyle{definition}
\newtheorem{defi}{Definition}[section]
\newtheorem{propdef}[defi]{Proposition and Definition}
\theoremstyle{plain}
\newtheorem{prop}[defi]{Proposition}
\newtheorem{lem}[defi]{Lemma}
\newtheorem{thm}[defi]{Theorem}
\newtheorem*{thm*}{Theorem}
\newtheorem*{prop*}{Proposition}
\newtheorem{cor}[defi]{Corollary}
\newtheorem{ques}[defi]{Question}
\theoremstyle{remark}
\newtheorem{rem}[defi]{Remark}
\newtheorem{eg}[defi]{Example}
\crefname{thm}{theorem}{theorems}
\crefname{lem}{lemma}{lemmata}
\crefname{prp}{proposition}{propositions}
\crefname{cor}{corollary}{corollaries}
\newcommand{\mc}{\mathcal}
\newcommand{\mbb}{\mathbb}
\newcommand{\nat}{\mbb N}
\newcommand{\cc}{\mathbb C}
\newcommand{\rr}{\mathbb R}
\newcommand{\qq}{\mbb Q}
\newcommand{\ff}{\mbb F}
\newcommand{\zz}{\mbb Z}
\newcommand{\co}{\mc O}
\newcommand*{\house}[1]{%
	\mathord{%
		\mathpalette\@house{#1}%
	}%
}
\newcommand*{\@house}[2]{%
	\dimen@=\fontdimen8 %
	\ifx#1\scriptscriptstyle\scriptscriptfont
	\else\ifx#1\scriptstyle\scriptfont
	\else\textfont\fi\fi
	3 %
	\sbox0{%
		$#1%
		\vrule width\dimen@\relax
		\overline{%
			\kern2\dimen@
			\begingroup 
			#2%
			\endgroup
			\kern2\dimen@
		}%
		\vrule width\dimen@\relax
		\mathsurround=1.5\dimen@ 
		$%
	}%
	\ht0=\dimexpr\ht0-\dimen@\relax
	\dp0=\dimexpr\dp0+2\dimen@\relax
	\vbox{%
		\kern\dimen@ 
		\copy0 %
	}%
}
\DeclareMathOperator{\Frac}{Frac}
\DeclareMathOperator{\Tr}{Tr}
\DeclareMathOperator{\Norm}{N}
\DeclareMathOperator{\Gal}{Gal}
\DeclareMathOperator{\sgn}{sgn}
\newcommand{\tpu}{\mc{O}^{+, \times}}
\newcommand{\Rc}{R_{\operatorname{class}}}
\newcommand{\NPI}{NPI}
\newcommand{\multi}[2][]{
\ifx &#1&
{\qq^{[#2]}}
\else
{{#1}^{[#2]}}
\fi
}
\newcommand{\multiBZ}[2][]{
\ifx &#1&
{\qq^{(#2)}}
\else
{{#1}^{(#2)}}
\fi
}
\newcommand{\multiBZab}[2][]{
\ifx &#1&
{\qq_{ab}^{(#2)}}
\else
{{#1}_{ab}^{(#2)}}
\fi
}
\title{Universal quadratic forms and Northcott property of infinite number fields}
\author{Nicolas Daans}
\author{Vítězslav Kala}
\author{Siu Hang Man}
\address{Charles University, Faculty of Mathematics and Physics, Department of Algebra, Sokolov\-sk\' a 83, 186~75 Praha~8, Czech Republic}
\email[N.~Daans]{nicolas.daans@matfyz.cuni.cz}
\email[V.~Kala]{vitezslav.kala@matfyz.cuni.cz}
\email[S.~H.~Man]{shman@karlin.mff.cuni.cz}
\thanks{The authors were supported by Czech Science Foundation GA\v CR, grant 21-00420M. N.D.~and S.H.M.~were supported by Charles University programme PRIMUS/24/SCI/010. S.H.M.~was also supported by OP RDE project
	No. CZ.02.2.69/0.0/0.0/18\_053/0016976 International mobility of research, technical and administrative staff at the
	Charles University. \\
This is an author accepted manuscript (AAM). This manuscript was published in the Journal of the London Mathematical Society and the published version may be found here: \url{https://doi.org/10.1112/jlms.70022}.
	}
\begin{document}
\begin{abstract}
We show that if a universal quadratic form exists over an infinite degree, totally real extension of the field of rationals $\qq$, then the set of totally positive integers in the extension does not have the Northcott property. 
In particular, this implies that no universal form exists over the compositum of all totally real Galois fields of a fixed prime degree over $\qq$. Further, by considering the existence of infinitely many square classes of totally positive units, we show that no classical universal form exists over the compositum of all such fields of degree $3d$ (for each fixed odd integer~$d$).
\end{abstract}

\makeatletter
\@namedef{subjclassname@2020}{%
  \textup{2020} Mathematics Subject Classification}
\makeatother
\subjclass[2020]{11E12, 11E20, 11G50, 11H55, 11R04, 11R20, 11R80}
\keywords{universal quadratic form, quadratic lattice, totally real field, infinite extension, Northcott property, totally positive unit}

\maketitle

\section{Introduction}

The study of representations of integers as values of quadratic forms has history going back to the ancient times. In the last century it led to classification works of universal forms such as Ramanujan's, and to the $290$-theorem of Bhargava--Hanke \cite{Bhargava-Hanke}.

A natural generalization asks, for a given totally real number field $K$ with the ring of integers $\mc O_K$, whether a positive definite quadratic form defined over $\mc O_K$ is \textit{universal}, that is, represents all totally positive elements in $\mc O_K$. Besides Lagrange's theorem on the universality of the sum of four squares $x^2+y^2+z^2+w^2$ over $\qq$, Maa{\ss}  \cite{Maass} established the universality of the sum of three squares $x^2+y^2+z^2$ over $\qq(\sqrt{5})$. 
Although Siegel \cite{Siegel} proved that these are the only two fields in which the sum of any number of squares is universal,
some universal quadratic form exists for every totally real number field $K$: this readily follows from the asymptotic local-global principle established in \cite{HKK}, see e.g.~\cite[Section 5]{KalaSurvey} for a sketch of a proof.
It is hence natural to ask what the minimal number of variables of such a universal quadratic form is. It is known that this minimal number of variables can be arbitrarily large; in fact, for any natural number $r$, there are infinitely many quadratic  \cite{Kala-no-universal} and cubic \cite{Ya} number fields over which there exists no universal quadratic form in $r$ variables.
Recent work of Kala--Yatsyna--Żmija refines this result to show that the real quadratic fields admitting a universal quadratic form with a given number of variables have density zero \cite{Kala-Yatsyna-Zmija}, which was then further extended to multiquadratic fields by Man \cite{Man}.

In this paper, we consider a further generalization of the problem, and study quadratic forms over totally real algebraic field extensions of $\qq$, possibly of infinite degree (this is what we mean by `infinite number fields' in the title). Here, 
it is very unclear whether any universal forms exist at all.

We pioneer two ways of approaching the problem that are based on Northcott numbers, and on square classes of totally positive units.
Our main results then show that indeed, universal forms need not exist and that this, in fact, happens quite frequently.

Let us introduce a few notations and concepts.
We shall work with \emph{quadratic lattices}, which provide a slight generalization of quadratic forms, see \Cref{sect:universal-lattices} for a definition. 
Further, for $d \in \nat$ (here $\nat$ denotes the set of positive integers), we denote by $\multiBZ{d}$ the compositum of all number fields of degree at most $d$, and by $\multi{d}$ the compositum of all totally real Galois extensions of $\qq$ of degree exactly $d$.
For example, one has $\multi{2} = \qq(\sqrt{n} \mid n \in \nat)$ and $\multiBZ{2} = \qq(\sqrt{n} \mid n \in \zz) = \multi{2}[\sqrt{-1}]$.

\begin{thm}\label{thm:newmain}
There are no universal quadratic lattices over any infinite extension of $\qq$ contained in $\multi{q}$, where $q=p$ or $q = p^2$ for a prime number $p$.
\end{thm}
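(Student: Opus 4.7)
The plan is to combine the paper's Northcott-versus-universal-forms theorem with a Northcott-type verification for $\multi{q}$. The first main theorem of the paper (as announced in the abstract) asserts that if an infinite totally real extension $L/\qq$ admits a universal quadratic lattice, then $\mathcal O_L^+$ fails the Northcott property. Taking the contrapositive reduces the statement at hand to the claim that $\mathcal O_L^+$ has the Northcott property for every infinite extension $L \subseteq \multi{q}$ of $\qq$.

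To unlock this Northcott property I would first record the structural features of $\multi{q}$. Since every group of order $p$ or $p^2$ is abelian, each constituent Galois extension of degree $q$ defining $\multi{q}$ is abelian over $\qq$; hence $\multi{q} \subseteq \qq^{\mathrm{ab}}$ and $\Gal(\multi{q}/\qq)$ is a (pro-)abelian group of exponent dividing $q$. By Kronecker--Weber every finite subextension of $L$ lies in some cyclotomic field $\qq(\zeta_N)$, and being totally real and of exponent dividing $q$, it in fact sits inside $\qq(\zeta_N)^+$ with Galois group a subquotient of $(\zz/N\zz)^\times$ of exponent dividing $q$. In particular $L$ itself is totally real, so the notion of $\mathcal O_L^+$ makes sense.

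Given a totally positive integer $\alpha \in \mathcal O_L$ all of whose conjugates lie in some interval $[0,B]$, the coefficients of its minimal polynomial are bounded in absolute value in terms of $B$ and the degree $[\qq(\alpha):\qq]$, so finiteness of the admissible set reduces to bounding $[\qq(\alpha):\qq]$ in terms of $B$ alone. This degree bound is the main obstacle. I would attack it by exploiting the exponent-$q$ abelian-cyclotomic structure: embed $\qq(\alpha)$ in $\qq(\zeta_N)^+$, use the exponent constraint on $\Gal(\qq(\alpha)/\qq)$ to restrict the possible conductors and Galois orbits of $\alpha$, and combine this with either a Widmer-style discriminant criterion for the Northcott property or a Schur--Siegel--Smyth-type lower bound on the absolute trace of a totally positive algebraic integer, in order to force $[\qq(\alpha):\qq]$ to be bounded. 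Once the degree is bounded, the classical Northcott theorem applied inside a fixed finite subfield of $\multi{q}$ completes the proof.
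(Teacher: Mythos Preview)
Your reduction is exactly the paper's: invoke \Cref{thm:univ-Northcott} (equivalently \Cref{thm:3.1}) in contrapositive form, observe that every group of order $p$ or $p^2$ is abelian so that $\multi{q}$ is an abelian extension of $\qq$, and then conclude by showing that $\mc O_L^+$ has the Northcott property with respect to the house for any $L \subseteq \multi{q}$. Up to this point the two arguments coincide.

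The gap is in your final paragraph. The paper does not attempt to prove the Northcott property for $\multi{q}$ from scratch; it simply quotes Bombieri--Zannier \cite[Theorem~1]{Bombieri-Zannier}, which establishes that $\multiBZab[K]{d}$ has the Northcott property with respect to the Weil height, and then uses the inequality $h(\alpha) \leq \log\house{\alpha}$ to pass to the house. This is a deep input (resting on Amoroso--Zannier-type height lower bounds in abelian extensions), and your sketch does not supply a substitute for it. In particular, a Schur--Siegel--Smyth bound gives $\Tr(\alpha)/\deg(\alpha) \geq c$ for totally positive $\alpha$, but combined with $\Tr(\alpha) \leq \deg(\alpha)\cdot\house{\alpha}$ this only yields $\house{\alpha} \geq c$; it does \emph{not} bound $\deg(\alpha)$ in terms of $\house{\alpha}$, which is what you need. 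A Widmer-type discriminant criterion could in principle apply, but you would then need to control discriminant growth in towers inside $\multi{q}$, which you have not done and which is itself non-trivial. As written, the proposal identifies the right target but does not hit it; citing Bombieri--Zannier (as the paper does via \Cref{thm:newmain-general}) closes the argument in one line.
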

In fact, we prove non-existence of universal quadratic lattices for a larger class of totally real fields, see \Cref{thm:newmain-general} for the precise statement.

The proof is based on considering the Northcott property for totally positive integers.
First recall that the \textit{house} of $\alpha\in K$ is defined as
$\house{\alpha}= \max_i(|\alpha_i|)$, where 
 $\alpha_1,\dots,\alpha_n$ are all the conjugates of  $\alpha$.
We will say that the set of totally positive integers $\mc{O}_K^+$ has the \emph{Northcott property} (with respect to the house) if, for every $r \in \rr$, there exist only finitely many $\alpha \in \mc{O}_K^+$ with $\house{\alpha} < r$. 
For short, in this Introduction we will also sometimes say that $K$ has the \emph{Northcott property for Positive Integers} (\NPI).

The Northcott property, and the related Northcott numbers and the Julia Robinson (JR) property (see \Cref{sect:northcott} for definitions), have been used extensively in the past as a way of classifying infinite extensions of $\qq$, and of extending properties from number fields to them, e.g.~in the context of decidability.
For additional context, recall that
Robinson \cite{Rob62} 
showed that, whenever $K$ is an algebraic extension of $\qq$ such that $\mc{O}_K^+$ has the JR property, then the semiring $(\nat, +, \cdot)$ is first-order definable in $\mc{O}_K$ (and thus $\co_K$ has undecidable first-order theory). Perhaps coincidentally, her proof used the fact that every totally positive element of $K$ is the sum of four squares of elements of $K$. 
Specifically, she showed that this holds for $\multi{2}$ (which has \NPI) 
or for the field of all totally real numbers (which does not have \NPI).
In recent years, her works have inspired new developments in decidability in number theory; see e.g.~\cite{Vidaux-Videla15} for a study of rings with and without the JR property,  \cite{Vidaux-Videla} for the undecidability of the first-order theory of the maximal abelian subfield of $\multi{d}$ for every $d \in \nat$ (by using \NPI), or \cite{U2,U3,U1} for the undecidability of the first-order theory of certain non-real infinite algebraic extensions of $\qq$, using Northcott property and Northcott numbers.
We mention also the recent preprint \cite{MRS}, which contains both a new uniform approach to the first-order definability of $\nat$ in infinite extensions, and discusses the history of the study in more detail.

It is known that the fields $\multi{p}$ and $\multi{p^2}$ have \NPI\ for any prime $p$; and more generally, it follows from the work of Bombieri--Zannier  \cite{Bombieri-Zannier} that, for any $d \in \nat$, any abelian subfield of $\multiBZ{d}$ has \NPI\ 
(although we work with the house for concreteness, this follows from their result on the Weil height; see also \cite{Widmer} for a recent survey).
Furthermore, clearly any subfield of a field with \NPI\ also has \NPI.
Hence we obtain \Cref{thm:newmain} as a consequence of the following.
\begin{thm}\label{thm:univ-Northcott}
Let $K$ be a totally real infinite  extension of $\qq$.
If $\mc{O}_K^+$ has the {Northcott property} with respect to the house, then there exists no universal quadratic lattice over $K$.
\end{thm}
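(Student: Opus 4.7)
My plan is an iterative descent: if a universal quadratic lattice $(M, Q)$ of rank $n$ existed over $\co_K$, one can force every totally positive integer into a single number field, contradicting the infinitude of $K$.

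After replacing $(M, Q)$ by a free cover (using projectivity, decompose $M \oplus M' \cong \co_K^N$ and extend $Q$ by a positive-definite form on $M'$), I may assume $Q$ is a positive-definite quadratic form on $\co_K^N$ whose Gram matrix has entries in a number field $L \subseteq K$. Since $K$ is totally real, each real embedding $\sigma$ of $L$ yields a positive-definite real form $\sigma(Q)$; let $c > 0$ be the minimum over these finitely many $\sigma$ of the smallest eigenvalue. For $v = (v_1, \ldots, v_N) \in \co_K^N$ with $Q(v) = \alpha \in \co_K^+$ and any real embedding $\tau$ of $K$, positive definiteness gives $\tau(\alpha) \geq c \sum_i \tau(v_i)^2$, hence $\house{v_i} \leq C \sqrt{\house{\alpha}}$ with $C = c^{-1/2}$. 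The NPI hypothesis extends from $\co_K^+$ to all of $\co_K$ via the shift $\beta \mapsto \beta + \lceil R \rceil$, so $\{\beta \in \co_K : \house{\beta} < R\}$ is finite for every $R$.

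For the descent, given $\alpha \in \co_K^+$ I write $\alpha = Q(v^{(1)})$ and shift each coordinate $v^{(1)}_i$ by a positive rational integer of size $O(\sqrt{\house{\alpha}})$ to obtain totally positive $\beta^{(1)}_i \in \co_K^+$ with $\house{\beta^{(1)}_i} \leq D \sqrt{\house{\alpha}}$ for some constant $D$ depending only on $C$. Iterating this on each $\beta^{(1)}_i$ builds a tree whose node heights satisfy $B_{k+1} \leq D \sqrt{B_k}$, converging monotonically to the finite fixed point $D^2$; after $k_0 \sim \log_2 \log_2 \house{\alpha}$ iterations every leaf lies in the finite set $T^* := \co_K^+ \cap \{\house < 2D^2\}$. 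Setting $F^* := L(T^*)$, a number field depending only on $Q$, I back-propagate: each depth-$(k{-}1)$ scalar equals $Q$ applied to a depth-$k$ vector whose components (leaves shifted by rational integers) lie in $F^*$, and since $Q$ has coefficients in $L \subseteq F^*$ this scalar lies in $F^*$. Thus $\alpha \in F^*$, so $\co_K^+ \subseteq F^*$. Since every element of $\co_K$ is the difference of a totally positive integer and a rational integer, $\co_K \subseteq F^*$, and therefore $K = \Frac(\co_K) \subseteq F^*$, contradicting $[K : \qq] = \infty$.

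The main obstacle is calibrating the descent so it terminates with $T^*$ independent of $\alpha$: the constant $C$ is fixed by $Q$ once and for all, so the recurrence $B_{k+1} = D \sqrt{B_k}$ has a finite attracting fixed point and the iteration terminates in doubly logarithmically many steps. The lattice-vs-form subtlety is absorbed at the outset by the free cover, while the fact that all coordinate shifts are rational integers makes the back-propagation to $F^*$ automatic.
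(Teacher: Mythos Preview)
Your argument is correct and follows essentially the same route as the paper's proof of its Theorem~3.1: reduce to coordinates over a number field, bound the coordinates of a representation by $C\sqrt{\house{\alpha}}$ via positive definiteness, iterate with an integer shift to total positivity so that houses contract to a fixed bound, then use Northcott to trap the limiting set in a number field and back-propagate. The only cosmetic differences are that you pass to a free cover via projectivity of $\co_K$-lattices (the paper instead chooses generators over a number field using its Proposition~2.1) and that you give the eigenvalue bound explicitly where the paper cites \cite[Lemma~4]{KY23}.
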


It is conjectured that $\multiBZ{d}$ has the Northcott property with respect to the Weil height for every $d \in \nat$, from which it would follow that $\multiBZ{d}$ (and thus all of its subfields) have \NPI\ for every $d \in \nat$, but this conjecture remains a hard open question.

\Cref{thm:univ-Northcott} will be proved as \Cref{thm:3.1}, where we actually prove a slightly more general statement that applies also to subrings $\mathcal O\subseteq \mathcal O_K$.
To briefly sketch the idea, suppose that $(\Lambda,Q)$ is a universal lattice over $K$ and let us naturally extend the house to $\Lambda$  (concretely, for $v=(v_1,\dots,v_n)\in\Lambda\subseteq K^n$ we set $\house{v}=\max(\house{v_1},\dots,\house{v_n})$). We then have that if $Q(v)=\alpha$, then $\house{v}$ is bounded above in terms of $\house{\alpha}$. Thus in order for $Q$ to  represent an element $\alpha$, the representing vector $v$ needs to have small heights of its coordinates, from which we can establish the existence of infinitely many totally positive elements of $\mc O_K$ with bounded height.

Let us also note that, somewhat curiously, every number field has \NPI, 
despite the fact that every number field admits a universal quadratic lattice. However, from the proof of \Cref{thm:univ-Northcott} it is clear that one indeed crucially needs to use the infinite degree assumption.

Very recently, Prakash generalized the arguments given in this paper to show that an analogue of \Cref{thm:univ-Northcott} holds for forms and lattices of higher degree \cite{Prakash}.

\medskip

In an alternative approach, we study the behavior of the units, towards an application to determining the existence of universal \emph{classical} lattices, i.e.~such that all the values of the associated symmetric bilinear form lie in $\mc O_K$. Let us denote by $\mc{O}_K^\times$ the group of invertible integers of $K$ (i.e.~units), by $\mc{O}_K^{\times 2}$ the subgroup of square units, and $\tpu_{K} = \mc{O}_K^+ \cap \mc{O}_K^\times$. Precisely, we study the quotient $\tpu_K/\mc{O}_K^{\times 2}$. When $K$ is a number field, the quotient $\tpu_K/\mc{O}_K^{\times 2}$ is always finite and its size is closely related to the narrow class group of $K$, whose behavior is a focus of much research, see for example \cite{SignatureRanks1,SignatureRanks2,SignatureRanks3,DV18,Stevenhagen}. 

When $K$ is an algebraic field of infinite degree, then the quotient $\tpu_K/\mc{O}_K^{\times 2}$ can be infinite. And when $K$ is totally real, the infinitude of the quotient $\tpu_K/\mc{O}_K^{\times 2}$ has an interesting consequence concerning universal lattices.

\begin{prop}\label{prop:intro}
Let $K$ be a totally real extension of $\qq$. If $\lvert \tpu_K/\mc{O}_K^{\times 2} \rvert$ is infinite, then there exists no classical universal quadratic lattice over $K$.
\end{prop}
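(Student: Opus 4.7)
The plan is a proof by contradiction. Suppose $(\Lambda, Q)$ is a classical universal quadratic lattice over $\mc O_K$ of finite rank $n$. For each $\alpha \in \tpu_K$, universality yields some $v_\alpha \in \Lambda$ with $Q(v_\alpha) = \alpha$. My aim is to show that only finitely many square classes $[\alpha] \in \tpu_K/\mc O_K^{\times 2}$ can arise, yielding the desired contradiction.

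An initial observation, not yet using classicality, is that distinct square classes force pairwise non-proportionality: if $\alpha, \beta \in \tpu_K$ with $[\alpha] \neq [\beta]$ and $v_\alpha = c v_\beta$ for some $c \in K^\times$, then $c^2 = \alpha/\beta \in \mc O_K^\times$; integral closure of $\mc O_K$ in $K$ forces $c \in \mc O_K^\times$, so $[\alpha] = [\beta]$, a contradiction. Hence the representatives $v_\alpha$ of distinct square classes span distinct $K$-lines in $\Lambda \otimes_{\mc O_K} K$. By itself this is insufficient, since infinitely many lines can exist in a finite-dimensional $K$-space, so classicality must contribute further.

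The classicality of $B$ provides the following crucial rigidity: for $v \in \Lambda$ with $Q(v) = \alpha \in \mc O_K^\times$, the reflection $r_v(w) := w - \frac{B(v,w)}{\alpha} v$ is a well-defined $\mc O_K$-linear isometry of $\Lambda$, because $B(v,w) \in \mc O_K$ (classicality) and $\alpha^{-1} \in \mc O_K$. Consequently, one obtains an orthogonal decomposition $\Lambda \otimes_{\mc O_K} \mc O_K[\tfrac{1}{2}] = \mc O_K[\tfrac{1}{2}] v \perp v^{\perp}$. I expect the proof then proceeds by induction on the rank $n$: the base case $n=1$ is immediate, since $\mc O_K v_0$ represents units only in the single square class $[Q(v_0)]$; the inductive step studies how unit values of $Q$ decompose into a contribution from $\mc O_K[\tfrac{1}{2}] v_\alpha$ and from the rank-$(n-1)$ classical complement $v_\alpha^{\perp}$, aiming for a bound of the form $|\tpu_K / \mc O_K^{\times 2}| \leq f(n)$ for an explicit $f$.

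The principal obstacle is making the inductive argument rigorous, for two related reasons. First, the orthogonal splitting is valid only over $\mc O_K[\tfrac{1}{2}]$, so $2$-adic information must be tracked separately, and one must verify that finiteness of square classes over $\mc O_K[\tfrac{1}{2}]^\times$ transfers to $\mc O_K^\times$ (which holds for units by the integral-closure argument above). Second, $\mc O_K$ in an infinite extension of $\qq$ is typically non-Noetherian, so standard finiteness assertions—on $\Lambda/N\Lambda$, on discriminant-group quotients $\Lambda^{\#}/\Lambda$, or on finiteness of class groups—are unavailable. Navigating these obstructions while keeping the argument purely structural (ultimately at the level of $K^\times/K^{\times 2}$, which suffices for units) is the main technical challenge I anticipate.
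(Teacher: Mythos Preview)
Your orthogonal-splitting idea is the right one, but you have introduced a phantom obstacle and missed the genuine one.

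First, the $\mc O_K[\tfrac{1}{2}]$ concern is spurious. The map you wrote, $w \mapsto w - B(v,w)Q(v)^{-1}v$, involves no division by $2$: since $(\Lambda,Q)$ is classical one has $B(v,w)\in\mc O_K$, and since $Q(v)=\alpha\in\mc O_K^\times$ the coefficient $B(v,w)\alpha^{-1}$ already lies in $\mc O_K$. Hence $\Lambda = \mc O_K v \perp \Lambda'$ holds over $\mc O_K$ itself (this is exactly \Cref{prop:splitting-units}), and your first listed obstruction disappears. (Your map is the orthogonal projection onto $v^\perp$, not the reflection; it is not an isometry, but it is precisely what produces the splitting.)

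Second, the actual content of the inductive step is left as ``studies how unit values decompose'' without identifying the mechanism. Suppose inductively that $\Lambda = \mc O_K v_1 \perp \cdots \perp \mc O_K v_{k-1} \perp \Lambda'$ with $Q(v_i)=\varepsilon_i$, and represent a new unit $\varepsilon_k$:
\[
\varepsilon_k \;=\; \varepsilon_1 x_1^2 + \cdots + \varepsilon_{k-1}x_{k-1}^2 + Q(v_k'), \qquad x_i\in\mc O_K,\ v_k'\in\Lambda'.
\]
To continue you need $Q(v_k')=\varepsilon_k$, i.e.\ all $x_i=0$. The missing lemma is that a totally positive unit in a totally real \emph{number field} is \emph{indecomposable} (not a sum of two nonzero totally positive integers). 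This may fail over infinite extensions, but it becomes available once you descend to the number field $K_0\subseteq K$ generated by the finitely many elements $\varepsilon_i,\,x_i,\,Q(v_k')$. Indecomposability in $K_0$ forces exactly one summand on the right to equal $\varepsilon_k$ and the rest to vanish; since $[\varepsilon_k]\neq[\varepsilon_i]$ in $\tpu_K/\mc O_K^{\times 2}$ for $i<k$, that summand must be $Q(v_k')$. This gives $f(n)=n$, and the descent to $K_0$ simultaneously dissolves your non-Noetherian worry: the only finiteness used is that subfields of $K$ finitely generated over $\qq$ are number fields.
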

The argument for this is quite standard: it is well-known that if a classical quadratic lattice $(\Lambda, Q)$ over a number field $K$ represents a unit in $\mc O_K$, say $Q(v) \in \mc O_K^\times$, then we may split the quadratic lattice as an orthogonal sum $\Lambda = \mc O_K v \perp \Lambda'$ for a sublattice $\Lambda' \subseteq \Lambda$. This also works for fields of infinite degree, with essentially the same proof (see Propositions \ref{prop:splitting-units} and \ref{prop:units-no-classical}).

This observation yields a different route to establishing examples of infinite degree fields without classical universal quadratic lattices.
While most of the examples we can construct are covered by \Cref{thm:newmain}, we also obtain new ones. Specifically, in Theorems \ref{thm:Q2} and \ref{thm:Q3} we show the following result.

\begin{thm}\label{thm:main}
Let $K = \multi{2}$, or $K = \multi{3d}$ for some odd positive integer $d$.
Then $\tpu_K/\mc{O}_K^{\times 2}$ is infinite.
In particular,
there are no universal classical quadratic lattices over $\multi{2}$ or $\multi{3d}$ for any odd positive integer $d$.
\end{thm}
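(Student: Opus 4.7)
By \Cref{prop:intro}, to prove the theorem it suffices to show that $\tpu_K/\mc O_K^{\times 2}$ is infinite. Writing $U_L = \mc O_L^\times$ for brevity, the plan in each case is to find, for every positive integer $n$, a finite number-field subextension $L_n \subseteq K$ satisfying two properties: (i) the natural map $\tpu_{L_n}/U_{L_n}^2 \to \tpu_K/U_K^2$ is injective, and (ii) $|\tpu_{L_n}/U_{L_n}^2|$ grows without bound as $n \to \infty$.

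For $K = \multi{3d}$ with $d$ odd, property (i) is immediate from a degree-parity argument. Every finite subfield $L \subseteq K$ has odd degree over $\qq$, being a compositum of totally real Galois extensions of odd degree $3d$. If $u \in U_L$ and $u = v^2$ with $v \in U_K$, then $v$ lies in a finite odd-degree subfield; the polynomial $x^2 - u$ gives $[L(v):L] \in \{1,2\}$, and oddness of $[L(v):\qq]$ forces $[L(v):L] = 1$, hence $u \in U_L^2$. For (ii), I would take $L_n$ to be the compositum of $n$ pairwise-disjoint totally real cyclic cubic fields each having totally positive fundamental units (these exist in abundance); the $n$ fundamental units then lie in $\tpu_{L_n}$, and via a Galois-theoretic argument using the $(\zz/3)^n$-action on $L_n$ one shows they remain $\ff_2$-linearly independent in $U_{L_n}/U_{L_n}^2$.

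For $K = \multi{2}$ the injectivity step is more delicate. Take $L_n = \qq(\sqrt 2, \sqrt{p_1}, \ldots, \sqrt{p_n})$ for distinct odd primes $p_i$. Suppose $u \in \tpu_{L_n}$ satisfies $u = v^2$ with $v \in U_K$. Since $K/\qq$ is an elementary abelian 2-extension, the Galois closure of $L_n(\sqrt u)/\qq$ inside $K$ is multi-quadratic, hence equals $\qq(\sqrt{a_1},\ldots,\sqrt{a_m})$ for some $a_j \in \qq^\times$. Kummer theory over $L_n$ then yields $u \equiv a \pmod{L_n^{\times 2}}$ for some squarefree $a \in \qq^\times$. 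A valuation argument using $v_{\mathfrak p}(u) = 0$ at each prime $\mathfrak p$ of $L_n$ together with $v_{\mathfrak p}(a) = e_{\mathfrak p}\, v_\ell(a)$ forces the support of $a$ to lie in the ramified primes $\{2, p_1, \ldots, p_n\}$; since $\sqrt 2$ and each $\sqrt{p_i}$ already lie in $L_n$, we conclude $a \in \{1,-1\}$ modulo $L_n^{\times 2}$. Total positivity of $u$ excludes $a = -1$, leaving $u \in U_{L_n}^2$. For (ii), I would appeal to genus theory: choosing the $p_i \equiv 3 \pmod 4$ and exploiting Rédei-type obstructions on the 2-part of the narrow class group of $L_n$, one shows that the signature-rank deficiency of $U_{L_n}$---which is exactly $\dim_{\ff_2}(\tpu_{L_n}/U_{L_n}^2)$---grows without bound.

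The main obstacle in both cases is property (ii). For $\multi{3d}$ one must produce enough degree-$3d$ totally real Galois fields with totally positive fundamental units and carefully verify that these units remain independent modulo squares upon taking compositums. For $\multi{2}$ one must concretely establish unbounded signature-rank deficiency, which is an intricate problem in the genus theory of multi-quadratic fields. The injection step (i), by contrast, is a relatively clean combination of Kummer theory and valuation-theoretic constraints, and is where the specific structure of $K$ enters most visibly.
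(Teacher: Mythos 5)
Your reduction to showing $\tpu_K/\mc{O}_K^{\times 2}$ is infinite via \Cref{prop:intro} is exactly the paper's strategy, and for $K = \multi{3d}$ your sketch is essentially the paper's argument: the parity step (i) is \Cref{prop:totally-positive-odd-extension}, your claim (ii) about cyclic cubics with totally positive fundamental units is the cited result of \cite{CyclicCompositum}, and the ``Galois-theoretic argument using the $(\zz/3)^n$-action'' is the content of \Cref{prop:composita-units-multi-odd} (a norm computation using $\prod_{\sigma}\sigma(\alpha_i) = 1$ together with oddness of degree). So the $\multi{3d}$ case is in order, modulo fleshing out the details.

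For $K = \multi{2}$, your step (i) is correct and is in fact a clean argument the paper does not spell out: since the ramified primes of $L_n = \qq(\sqrt 2,\sqrt{p_1},\dots,\sqrt{p_n})$ are exactly $\{2,p_1,\dots,p_n\}$ and their square roots already lie in $L_n$, any $u \in \tpu_{L_n} \cap K^{\times 2}$ must already lie in $U_{L_n}^2$. But your step (ii) is a genuine gap, not merely an exercise. No result in the literature establishes unbounded signature-rank deficiency for the tower $\qq(\sqrt 2, \sqrt{p_1},\ldots,\sqrt{p_n})$; ``R\'edei-type obstructions'' control the $4$-rank of the (narrow) class group, which does not directly translate into a lower bound on $\dim_{\ff_2}(\tpu_{L_n}/U_{L_n}^2)$. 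Indeed, the paper's own discussion in Subsection \ref{sec:ques} indicates that the behaviour of totally positive units in multiquadratic fields is governed by Cohen--Lenstra-type heuristics and is considered delicate. The paper sidesteps this entirely by avoiding any attempt to control a fixed tower $L_n$: it instead constructs an explicit linearly disjoint family of biquadratic fields $K_i = \qq(\sqrt{n_i(n_i+1)},\sqrt{3n_i(3n_i+4)})$ (for $n_i \equiv 1 \pmod{12}$ and suitable squarefree conditions) each carrying an \emph{explicit} totally positive unit $\mu_i \notin K_i^{\times 2}\qq^\times$ (\Cref{prop:biquadratic-family}), and then shows directly via norms down to quadratic subfields and Kummer theory (\Cref{prop:composita-units-multiquadratic} and \Cref{prop:quadratic-lin-disjoint}) that the pairwise products $\mu_i\mu_j$ stay outside $K^{\times 2}\qq^\times$. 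This produces infinitely many distinct square classes of totally positive units in $\multi{2}$ without ever needing a growth statement for signature ranks along a tower. Note also that your valuation-theoretic injectivity argument would not transfer to composita of the paper's $K_i$: these are generated by square roots of composite squarefree integers, so the ramified primes are not individually square in $L_n$, and the final reduction of $a$ to $\pm 1$ fails. So you cannot simply swap in the paper's fields for your $L_n$ and keep both steps; the obstruction you correctly identify as the ``main obstacle'' is real and requires the paper's alternative construction.
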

In the recent preprint \cite[Theorem 2]{DKMWY} we show, together with Widmer and Yatsyna, that the restriction to `classical' universal quadratic lattices in \Cref{prop:intro}, and consequently in \Cref{thm:main}, may in fact be dropped. The argument for non-classical lattices is noticeably more involved.

We observe that there also exist totally real fields of infinite degree with universal quadratic forms. 
Perhaps the easiest example is given by $K = \qq^{\operatorname{tr}}$, defined as the union of all totally real finite extensions of $\qq$. As the square root of any totally positive element of $\mc O_K$ is totally real, it again lies in $\mc O_K$, because $\mc O_K$ is integrally closed, and so $X^2$ is a universal form over $K$.
Another example is given by the \emph{Pythagorean closure} of $\qq$ (where again $X^2$ is universal), see \Cref{eg:pythagorean-closure} for details. Further, in \Cref{prop:universal-qf-of-degree-2power} we construct, for any natural number $n$, fields over which the smallest rank of a universal quadratic lattice is $2^n$. 

Our results leave many questions open for further research.
Specifically, it is tantalizing to consider whether converses to \ref{thm:univ-Northcott} and \ref{prop:intro} may hold. In both cases, we suspect that the answer will be `No', although we were not able to construct a counterexample for either of these.
We discuss two other open questions in Subsection \ref{sec:ques}.

\subsection*{Acknowledgements}
We would like to thank Arno Fehm, Fabien Pazuki, Om Prakash, Xavier Vidaux, Carlos Videla, and Martin Widmer for helpful comments on earlier drafts of the preprint.
We thank the anonymous referee for their careful proofreading, which led to the correction of several typos and small mistakes, and for pointing out to us certain references.

\section{Quadratic lattices over totally real fields}\label{sect:universal-lattices}

As we will be working with infinite extensions of $\qq$, let us carefully define the notions we will use.
Let $\alpha\in\cc$ be algebraic over $\qq$. A \textit{conjugate} of $\alpha$ is a root of the minimal polynomial for $\alpha$. 
The element $\alpha$ is \textit{totally real} if all its conjugates are real, and \textit{totally positive} if all its conjugates are positive real numbers.
An algebraic extension $K/\qq$ is \textit{totally real} if all elements of $K$ are totally real.
Throughout the paper, by a \textit{totally real field} $K$ we will mean a totally real algebraic extension of $\qq$, of finite or infinite degree.

Note that, clearly, an algebraic extension $K$ is totally real if and only if each subfield $K\supseteq F\supseteq \qq$ of finite degree $[F:\qq]$ is totally real, which can be characterized alternatively by the fact that all the complex embeddings $F\hookrightarrow\cc$ are real.

For an algebraic field extension $K/\qq$, 
$\mc{O}_K$ denotes its ring of integers (i.e.~of elements that are integral over $\zz$).
For a subset $S\subseteq K$, $S^+$ denotes all the totally positive elements of $S$, and $S^{+, \times}$ denotes the set of non-zero totally positive elements of $S$ whose multiplicative inverse also lies in $S$.

To phrase our results in a coordinate-free way, we will make use of so-called (quadratic) lattices, which generalize quadratic forms. We refer to \cite{OMe00} for an extensive exposition. We summarize some aspects here, partially for the convenience of the reader, but also because several sources work under the assumption that $K/\qq$ is a finite extension.

Let $\mc O \subseteq \mc O_K$ be a subring such that $\Frac \mc O = K$. An \emph{$\mc O$-lattice} is a finitely generated $\mc O$-module $\Lambda \subseteq V$, where $V$ is a finite dimensional $K$-vector space. The \emph{rank} of $\Lambda$ is the $K$-dimension of the $K$-span of $\Lambda$; it is a non-negative integer.
A \emph{quadratic map} on $V$ is a map $Q : V \to K$ such that $Q(av) = a^2Q(v)$ for all $a \in K$ and $v \in V$, and such that the map
$$ B_Q: V \times V \to K : (v, w) \mapsto \frac{1}{2}(Q(v+w) - Q(v) - Q(w))$$
is bilinear. If we endow the lattice $\Lambda$ with the restriction of a quadratic map $Q$ to $\Lambda$ (still denoted $Q$), the pair $(\Lambda, Q)$ is called a \emph{quadratic $\mc O$-lattice}.
We sometimes write simply \emph{quadratic lattice} for quadratic $\mc O_K$-lattice when the field $K$ is clear from context.

If $(\Lambda, Q)$ is a quadratic $\mc{O}_K$-lattice and $\Lambda$ is a \textit{free} $\mc{O}_K$-module, we may pick a basis $(x_1, \ldots, x_n) \subseteq \Lambda$ for $\Lambda$.
An arbitrary element of $\Lambda$ can then be written as $a_1x_1 + \ldots + a_nx_n$ for certain $a_i \in \mc{O}_K$, and we compute that
\begin{align*}
Q(a_1x_1 + \ldots + a_nx_n) &= \sum_{i=1}^n a_i^2Q(x_i) + 2\sum_{1 \leq i < j \leq n} a_ia_j B_Q(x_i, x_j) \\
&= F(a_1, \ldots, a_n)
\end{align*}

where $F( X_1, \ldots,  X_n)$ is the quadratic form 
\begin{align*}
F( X_1, \ldots,  X_n) = \sum_{i=1}^n  X_i^2Q(x_i) + 2\sum_{1 \leq i < j \leq n}  X_i X_j B_Q(x_i, x_j) \in K[ X_1, \ldots,  X_n].
\end{align*}
Conversely, to an arbitrary quadratic form $F( X_1, \ldots,  X_n)$ over $K$, we may associate the quadratic lattice $(\mc{O}_K^n, Q_H : \mc{O}_K^n \to K, \; (a_1, \ldots, a_n) \mapsto F(a_1, \ldots, a_n))$.
The theory of quadratic lattices as such generalizes the theory of quadratic forms over $\mc{O}_K$, and we may apply properties and definitions developed for quadratic lattices also to quadratic forms via this translation.

Consider again some arbitrary subring $\mc O \subseteq \mc{O}_K$ with $\Frac \mc O = K$, and let $(\Lambda, Q)$ be a quadratic $\mc O$-lattice.
An element $d \in K$ is \emph{represented} by $Q$ if $d = Q(v)$ for some $v \in \Lambda$. We say that the quadratic $\mc O$-lattice is \emph{integral} if $Q(\Lambda) \subseteq \mc O$.
We further call an integral quadratic $\mc O$-lattice $(\Lambda, Q)$
\begin{itemize}
\item \emph{positive definite} if $Q(v)$ is totally positive for every $0\ne v \in \Lambda$,
\item \emph{universal} if it is positive definite and represents every totally positive element of $\mc O$,
\item \emph{classical} if $B_Q(v, w) \in \mc O$ for all $v, w \in \Lambda$.
\end{itemize}
Note that for a quadratic form $F(X_1, \ldots, X_n) = \sum_{1 \leq i \leq j \leq n} b_{i j}X_iX_j$ with $b_{i j} \in K$, the associated quadratic lattice is integral if and only if $b_{i j} \in \mc{O}_K$ for all $1 \leq i \leq j \leq n$, and classical if and only if additionally $b_{i j} \in 2\mc{O}_K$ for $1 \leq i < j \leq n$.

When $(\Lambda, Q)$ is a quadratic $\mc O$-lattice and $\Lambda_1, \Lambda_2$ are $\mc O$-submodules of $\Lambda$, we write $\Lambda = \Lambda_1 \perp \Lambda_2$ if $\Lambda = \Lambda_1 \oplus \Lambda_2$ and $B_Q(v, w) = 0$ for all $v \in\Lambda_1$ and $w \in \Lambda_2$.
In this case, for any $v \in \Lambda_1$ and $w \in \Lambda_2$ we have $Q(v+w) = Q(v) + Q(w)$.

Finally, let $L/K$ be an extension of totally real fields.
If $(\Lambda, Q)$ is a quadratic $\mc O$-lattice for some subring $\mc O \subseteq \mc{O}_K$, and $\mc O' \subseteq \mc O_L$ is a subring such that $\Frac \mc O' = L$ and $\mc O \subseteq \mc O'$, then we can associate to it a quadratic $\mc O'$-lattice, which we denote by $(\Lambda_{\mc O'}, Q_L)$ and call the \emph{scalar extension of $(\Lambda, Q)$ to $\mc O'$}.
As an $\mc O'$-module, we have $\Lambda_{\mc O'} = \Lambda \otimes_{\mc O} \mc O'$, and $Q_L$ is the unique quadratic map on $\Lambda_{\mc O'}$ with the property that $Q_L(v \otimes a) = a^2Q(v)$ for all $v \in \Lambda$, $a \in L$.

Conversely, given a subring $\mc{O}'$ of $\mc{O}_L$ with $\Frac \mc{O}' = L$ and a quadratic $\mc O'$-lattice $(\Lambda, Q)$, we say that $\Lambda$ is \emph{defined over} $K$ if $\mc O'|_K = \mc O'\cap \mc O_K$ satisfies $\Frac (\mc O'|_K) = K$, and there is an $\mc O'|_K$-lattice $(\Lambda_{\mc O'|_K}, Q_K)$ such that $(\Lambda,Q)$ is its scalar extension to $\mc O'$. In this case, several properties of $\Lambda$ can be inferred from the properties of $\Lambda_{\mc O'|_K}$, which can be convenient.

\begin{prop}\label{prop:defined-over}
Let $L/\qq$ be an algebraic field extension, $\mc{O}$ a subring of $\mc{O}_L$ with $\Frac \mc{O} = L$, and $(\Lambda, Q)$ a quadratic $\mc O$-lattice. Then $(\Lambda, Q)$ is defined over some number field $K \subseteq L$.
\end{prop}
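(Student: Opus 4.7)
The plan is to find a number field $K \subseteq L$ over which $(\Lambda, Q)$ descends. First I collect the finitely many algebraic scalars from $L$ needed to describe the lattice: choose $\mc O$-generators $v_1, \ldots, v_n$ of $\Lambda$ and record the finite collection $\alpha_i = Q(v_i), \beta_{ij} = B_Q(v_i, v_j) \in L$. These algebraic numbers together generate a number subfield $K_0 \subseteq L$.

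Next, enlarge $K_0$ to a number field $K$ satisfying the condition $\Frac(\mc O \cap \mc O_K) = K$ required by the definition of ``defined over $K$''. To do this, I write each generator of $K_0$ as a quotient $a/b$ with $a, b \in \mc O$ (possible since $\Frac \mc O = L$) and adjoin all such $a, b$ to $\qq$. Each $a, b$ then lies in $\mc O \cap \mc O_K$, so the fraction field of $\mc O \cap \mc O_K$ contains $K_0$ and in fact equals $K$.

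The natural candidate for the descent is $\Lambda_K := \sum_{i=1}^n (\mc O \cap \mc O_K)\cdot v_i$, a finitely generated torsion-free $(\mc O \cap \mc O_K)$-submodule of the finite-dimensional $K$-subspace $V_K := Kv_1 + \cdots + Kv_n \subseteq V$. The restriction of $Q$ to $V_K$ takes values in $K$, since the expansion $Q(\sum r_iv_i) = \sum r_i^2\alpha_i + 2\sum_{i<j}r_ir_j\beta_{ij}$ involves only $r_i \in K$ and the $\alpha_i, \beta_{ij} \in K_0 \subseteq K$; this gives a quadratic $(\mc O \cap \mc O_K)$-lattice $(\Lambda_K, Q|_{V_K})$.

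The main obstacle is verifying that the natural map $\Lambda_K \otimes_{\mc O \cap \mc O_K} \mc O \to \Lambda$ is an isomorphism. Surjectivity is automatic; injectivity reduces to the claim that every $\mc O$-linear relation among $v_1, \ldots, v_n$ is an $\mc O$-linear combination of $(\mc O \cap \mc O_K)$-linear relations. The key observation is that $\mc O$ is the directed union of the Noetherian orders $\mc O \cap \mc O_{K'}$ as $K'$ ranges over number subfields of $L$: each is Noetherian since it is a $\zz$-submodule of the finitely generated $\zz$-module $\mc O_{K'}$, hence a $\zz$-order in $K'$. A standard coherence-type argument then yields that the $\mc O$-module of relations among the $v_i$ is itself finitely generated, so by further enlarging $K$ to contain the finitely many entries of a set of such generators, every relation over $\mc O$ becomes an $\mc O$-linear combination of relations already in $(\mc O \cap \mc O_K)^n$, completing the descent.
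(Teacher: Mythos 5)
Your overall strategy matches the paper's: collect finitely many scalars from $L$ describing the lattice, put them in a number field $K$, and verify the lattice descends. The crucial difference is in how you handle the descent (injectivity of $\Lambda_K \otimes_{\mc O \cap \mc O_K} \mc O \to \Lambda$), and this is where your proof has a genuine gap.

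You correctly identify that injectivity amounts to showing that the $\mc O$-module of relations among $v_1, \ldots, v_n$ is generated by relations with entries in $\mc O \cap \mc O_K$ (after enlarging $K$), and you reduce this to showing that the relation module $\Ker(\mc O^n \to \Lambda)$ is finitely generated over $\mc O$. But the justification you give --- that $\mc O$ is a directed union of Noetherian orders $\mc O \cap \mc O_{K'}$, and then ``a standard coherence-type argument'' yields finite generation --- does not work as stated. A directed union of Noetherian rings need not be a coherent ring: the standard sufficient condition for coherence of $\varinjlim R_\alpha$ with $R_\alpha$ Noetherian is that the transition maps $R_\alpha \to R_\beta$ be \emph{flat}, and the inclusions $\mc O \cap \mc O_{K'} \hookrightarrow \mc O \cap \mc O_{K''}$ between (generally non-maximal) orders in number fields are not flat in general. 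Without coherence (or some substitute), finite generation of the kernel of $\mc O^n \to \Lambda$ is not automatic, since $\mc O$ itself can be very far from Noetherian. Note also that the ``standard'' descent result for direct limits applies to \emph{finitely presented} modules, and whether $\Lambda$ is finitely presented over $\mc O$ is precisely the point at issue, so invoking it is circular.

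The paper avoids this difficulty by working with a more structured choice of generators: it first isolates a maximal $L$-linearly independent subset $v_1, \ldots, v_r$ among the generators $v_1, \ldots, v_k$, writes the remaining generators as $v_i = \sum_j c_{ij} v_j$ with $c_{ij} \in L$, and enlarges $K$ to contain numerators and denominators of the $c_{ij}$ (as well as of the Gram entries). The relation module then admits a concrete description controlled by the $c_{ij}$, which lie in $K$ by construction. This sidesteps the need for any general finiteness or coherence statement for the non-Noetherian ring $\mc O$. I would suggest restructuring your argument along these lines rather than trying to salvage the coherence claim.
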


\begin{proof}
Let $(\Lambda, Q)$ be a quadratic $\mc O$-lattice of rank $r$, generated by $v_1,\ldots, v_k \in \Lambda$. We have $k \geq r$, and without loss of generality $\lbrace v_1, \ldots, v_r \rbrace$ forms a maximal linearly independent subset of $\Lambda$. We may consider the $L$-vector space $V$ containing $\Lambda$ and such that the vectors $v_1,\ldots, v_r$ form a basis of $V$.
We must then have linear relations
$$v_i = c_{i1} v_1 + \ldots + c_{ir} v_r \ \text{ for } \ r+1 \le i \le k, \ \ c_{ij} \in L.$$ 

Since $\Frac \mc O = L$, we may write $c_{ij} = s_{ij}/t_{ij}$ with $s_{ij},t_{ij}\in \mc O$. Write $B_Q(v_i, v_j) = b_{ij} \in L$. Then we may again write $b_{ij} = u_{ij}/w_{ij}$ with $u_{ij},w_{ij} \in \mc O$. Let $K$ be the subfield of $L$ generated by the elements $s_{ij},t_{ij},u_{ij},w_{ij}$. Since there are only finitely many of these, we infer that $K$ is a number field, and $\mc O|_K = \mc O \cap \mc O_K$ satisfies $\Frac \mc{O}|_K = K$. Let $\Lambda_{\mc O|_K}$ be the $\mc O|_K$-submodule of $\Lambda$ generated by $v_1, \ldots, v_k$, let $V_K$ be the $K$-subspace of $V$ generated by $v_1, \ldots, v_k$, and set $Q_K = Q|_K$.
We have $\dim_K(V_K) = r$ and $$\Lambda_{\mc O|_K} \otimes_{\mc O|_K} L \cong V_K \otimes_{K} L\cong V\cong\Lambda\otimes_{\mc O} L.$$
It restricts to an isomorphism of $\mc O$-modules $\Lambda_{\mc O|_K} \otimes_{\mc O|_K} \mc O \to \Lambda$ which additionally respects the map $Q$.
Thus $\Lambda$ is defined over the number field $K$.
\end{proof}

Let us give an example showing that universal forms may exist over a totally real field of infinite degree.
A more elaborate class of examples will be given in \Cref{prop:universal-qf-of-degree-2power}.

\begin{eg}\label{eg:pythagorean-closure}
Let $K = \qq^{\operatorname{pyth}}$ be the \emph{Pythagorean closure} of $\qq$, i.e.~the smallest subfield of $\rr$ in which every sum of squares is a square.
This is a totally real field: this follows from the fact that if $K'$ is any totally real number field and $a \in K'$ is a sum of squares, then also $K'(\sqrt{a})$ is totally real.

We claim that $X^2$ is a universal quadratic form over $K$.
Indeed, let $x \in \mc{O}_K$ be totally positive.
By Artin's Theorem \cite[Theorem VIII.I.12]{Lam}, $x$ is a sum of squares in $K$, and then in fact $x$ is a square in $K$ by the construction of $K$.
Since $\mc{O}_K$ is integrally closed, $x$ is a square also in $\mc{O}_K$, as we wanted to show.
\end{eg}

\section{Northcott property}\label{sect:northcott}

Let $K$ be an algebraic extension of $\qq$ and let $H:K\rightarrow\mathbb R_{\geq 0}$ be a height function on  $K$. 
Let us not define formally what we mean by a general height $H$; instead, we just 
recall the two examples with which we will be working. 

Let $\alpha_1,\dots,\alpha_n$ be all the conjugates of an element $\alpha\in K$. We define the \textit{house} of $\alpha$ as $\house{\alpha}= \max_i(|\alpha_i|)$.  

The \textit{logarithmic Weil height} of $\alpha\in K$ is 
$$h(\alpha)=\frac1{[\qq(\alpha):\qq]}\sum_v[\qq(\alpha)_v:\qq_v]\log\max(1,|\alpha|_v),$$
where the sum runs over all the places $v$ of $\qq(\alpha)$, $\qq(\alpha)_v$ and $\qq_v$ are the corresponding completions of $\qq(\alpha)$ and $\qq$, and $|\alpha|_v$ is the corresponding normalized absolute value.

However, in our case of interest when $\alpha$ is an algebraic integer in a totally real field, the formula simplifies to 
$$h(\alpha)=\frac1{[\qq(\alpha):\qq]}\sum_{i=1}^n\log\max(1,|\alpha_i|).$$
If $\alpha\neq 0$, we clearly have 
$h(\alpha)\leq\log\house{\alpha}$ (cf. \cite[Lemma 7]{Vidaux-Videla}).

One defines the \textit{Northcott number} of a subset $S\subseteq K$ with respect to $H$ to be 
$$\mathcal N_H(S)=\inf\left\{t\in\mathbb R_{\geq 0}\mid \#\{\alpha\in S\mid H(\alpha)<t\}=\infty\right\}$$
(as usual, $\inf\emptyset=\infty$).
Here, we follow the framework of \cite{PTW}, which generalises earlier work of \cite{Vidaux-Videla,Vidaux-Videla15} (we point out that in this earlier work, the terminology `Northcott number' is reserved for the logarithmic Weil height, and the term `Julia Robinson number' is used instead when the house is considered).
If $\mathcal N_H(S)=\infty$, then we say $S$ has the \textit{Northcott property with respect to $H$} (NP).
A weaker property, introduced in \cite{Vidaux-Videla15}, is the \textit{Julia Robinson property} (JR): this is when either $\mc{N}_H(S) = \infty$, or the infimum in the definition of $\mc N_H$ is attained as minimum.

For recent developments in the study of the Northcott property, as well as a discussion of some other properties related to Northcott numbers and the Julia Robinson property, we refer to \cite{Checcoli-Fehm, Checcoli-Widmer, J-R-Numbers, Okazaki-Sano, Widmer11} (besides the works already mentioned in the Introduction).

Observe that by the inequality $h(\alpha)\leq\log\house{\alpha},$	
if $\mc{O}_K^+$ has NP with respect to the logarithmic Weil height $h$, then $\co_K^+$  has NP with respect to the house.

Let us now prove our main results related to the Northcott property. Actually, we shall prove the following theorem, which is a slight generalization of Theorem \ref{thm:univ-Northcott}.

\begin{thm}\label{thm:3.1}
Let $K$ be a totally real infinite extension of $\qq$, and $\mc O\subseteq\mc O_K$ a subring such that $\Frac \mc O = K$. If $\mc O^+$ has the Northcott property with respect to the house, then there exists no universal quadratic $\mc O$-lattice.
\end{thm}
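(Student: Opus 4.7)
Argue by contradiction: assume a universal quadratic $\mc O$-lattice $(\Lambda, Q)$ of rank $n$ exists. First, by \Cref{prop:defined-over}, I would descend to a number field $F \subseteq K$: fix $\mc O_0 := \mc O \cap \mc O_F$ (with $\Frac \mc O_0 = F$) and a quadratic $\mc O_0$-lattice $(\Lambda_0, Q_0)$ whose scalar extension is $(\Lambda, Q)$, then choose an $F$-basis for the $F$-span of $\Lambda_0$ in order to view $\Lambda \subseteq K^n$ with coordinate functions $v \mapsto v_i$.

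The next step is to extract a coercive estimate from positive definiteness. Since $F$ is a number field it has only finitely many real embeddings $\sigma$, and each $Q_0^\sigma$ is a positive definite real form on $\rr^n$; picking $c > 0$ below all their smallest eigenvalues, every real embedding $\tau \colon K \hookrightarrow \rr$ (which restricts to some $\sigma$) satisfies $\tau(Q(v)) \geq c \max_i \tau(v_i)^2$. Maximizing over $\tau$ gives the key inequality $\house{v} \leq c^{-1/2} \sqrt{\house{Q(v)}}$ for every $v \in \Lambda$. In parallel, the Northcott property on $\mc O^+$ extends to $\mc O$ via the shift $\alpha \mapsto \alpha + N$ with $N \in \zz \subseteq \mc O$ just above $\house{\alpha}$: this injects $\{\alpha \in \mc O : \house{\alpha} \leq R\}$ into $\{\beta \in \mc O^+ : \house{\beta} \leq 2R + 1\}$.

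Clearing denominators by a single $d \in \mc O_0 \setminus \{0\}$ with $d \Lambda \subseteq \mc O^n$, these two ingredients together show that $\{v \in \Lambda : \house{v} \leq R\}$ is finite for every $R$ and, moreover, sits inside $L_R^n$ for the number field $L_R$ generated over $F$ by the finite set $\{w \in \mc O : \house{w} \leq \house{d}\, R\}$. Combined with universality and the coercive estimate, every $\alpha \in \mc O^+$ with $\house{\alpha} \leq cR^2$ has a representing vector $v_\alpha \in \Lambda$ of house at most $R$, hence $\alpha \in L_R$. This yields the structural containment $\mc O^+ \cap \{\alpha : \house{\alpha} \leq cR^2\} \subseteq L_R$ for every $R$.

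The main obstacle is the final step: on its own, the containment above is consistent with the Northcott property, merely restating that bounded-house totally positive integers lie in a growing tower of number fields. The decisive move has to convert it into infinitely many distinct totally positive elements of $\mc O$ of uniformly bounded house, directly contradicting the Northcott property. I expect this to proceed either via (a) a pigeonhole count pitting the finiteness of bounded-house vectors in $\Lambda$ (which is controlled by a power of the count of bounded-house elements of $\mc O$) against the growth of $|\mc O^+ \cap \{\house{\alpha} \leq R\}|$ forced by $[K:\qq] = \infty$, or (b) a Galois-theoretic or iterative construction in which the coordinates $(v_\alpha)_i$ of representing vectors for judiciously chosen high-degree $\alpha \in \mc O^+$ are squared (giving totally positive integers of house at most $\house{\alpha}/c$) or conjugated to produce the required infinite family. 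Turning the structural inclusion into honest infinitude of bounded-house totally positive integers is the part I anticipate to be the trickiest.
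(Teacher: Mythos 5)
Your setup agrees with the paper's: descend via \Cref{prop:defined-over} to a number field $F$, choose coordinates, and use the positive-definiteness of the real embeddings of the descended form to get $\house{v}\le C\house{Q(v)}^{1/2}$ (the paper cites a lemma of Kala--Yatsyna for this, but your eigenvalue argument is essentially its proof). The shift $\gamma\mapsto\gamma+\lfloor\house{\gamma}\rfloor+1$ to land in $\mc O^+$ also matches the paper. However, you stop precisely at the point where the actual proof begins its decisive move, and the options (a) and (b) you sketch do not recover it.

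The gap is the \emph{iterated descent combined with its reversibility}. The paper takes an arbitrary $\alpha\in\mc O^+$, represents it, and replaces it by the $r$ shifted coordinates $\beta_j=\gamma_j+\lfloor\house{\gamma_j}\rfloor+1$, each of which has $\house{\beta_j}\le(2C+1)\house{\alpha}^{1/2}$; then it iterates this replacement. Because of the square root, after finitely many steps every element produced has house below the \emph{fixed} bound $(2C+1)^2$, independent of $\alpha$. Under the Northcott hypothesis there are only finitely many such elements, so let $M$ be the number field they generate together with $F$. Now the crucial observation: each step of the descent is reversible over $M$, since $\gamma_j=\beta_j-\lfloor\house{\gamma_j}\rfloor-1$ recovers the coordinates, and $\beta=Q(\gamma_1v_1+\cdots+\gamma_rv_r)$ recovers the parent. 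Unwinding from $B_i\subseteq\mc O_M^+$ back to $B_0=\{\alpha\}$ gives $\alpha\in M$, and as $\alpha$ was arbitrary, $K\subseteq M$, contradicting $[K:\qq]=\infty$. Note the contradiction is with \emph{infinite degree}, not with the Northcott property directly, so your stated goal of ``producing infinitely many bounded-house elements'' points at the wrong target. Also, squaring the coordinates (your option (b)) gives $\house{\gamma_j^2}\le C^2\house{\alpha}$, which destroys the square-root gain that makes the iteration terminate; the integer shift is essential. Finally, your containment $\mc O^+\cap\{\house{\alpha}\le cR^2\}\subseteq L_R$ with $L_R$ growing in $R$ is, as you note, compatible with Northcott; the paper's point is that there is a \emph{single} number field $M$, fixed once the constant $(2C+1)^2$ is fixed, that swallows all of $\mc O^+$, and that is what cannot happen.
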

\begin{proof}
Assume that $(\Lambda,Q)$ is a universal quadratic $\mc O$-lattice. By \Cref{prop:defined-over}, the quadratic $\mc O$-lattice $(\Lambda,Q)$ is defined over some number field $F$, i.e.~there is an $\mc O|_F$-lattice $(\Lambda_{\mc O|_F},Q_F)$ such that $(\Lambda,Q)$ is a scalar extension of $(\Lambda_{\mc O|_F},Q_F)$ to $\mc O$. 

We fix an $F$-basis $v_1,\ldots,v_r$ for $V_F = F\Lambda_{\mc O|_F}$, and let $u_1,\ldots, u_k$ be a set of generators of $\Lambda_{\mc O|_F}$. Then we have $u_i = \gamma_{i1}v_1+\ldots+\gamma_{ir}v_r$ for $\gamma_{ij}\in F$. By scaling the $F$-basis appropriately, we may assume $\gamma_{ij} \in \mc O|_F$. It follows that every $v\in\Lambda$ may be represented by some coordinates $(\gamma_1,\ldots,\gamma_r) \in \mc{O}^r$ with respect to the basis $v_1,\ldots, v_r$.
If such a vector represents an element $Q(v) = \beta \in \mc O^+$, then $\gamma_1,\ldots,\gamma_r,\beta$ lie in some number field $F\subseteq F' \subset K$, and so we can use \cite[Lemma 4]{KY23} that says
$$\max_{1\le i \le r} \house{\gamma_i} \le C \house{\beta}^{1/2}$$ for some constant $C>0$ that is independent of $\beta$ and $F'$.

Let $\alpha \in \mc O^+$ be arbitrary. Define $B_0 = \{\alpha\}$. We construct recursively the sets $B_i \subseteq \mc O^+$ for $i\ge 1$. For $\beta \in B_i$, since $(\Lambda, Q)$ is universal, we may find $v \in \Lambda$ with coordinates $(\gamma_1,\ldots, \gamma_r) \in \mc{O}^r$ such that $Q(v) = \beta$. Let $j=1,\dots, r$. Note that $\gamma_j$ need not be totally positive, but $$\beta_j = \gamma_j + \lfloor \house{\gamma_j} \rfloor + 1$$ is totally positive. Then we have the bounds $$\house{\beta_j} < 2\house{\gamma_j}+1, \ \ \house{\gamma_j} \le C\house{\beta}^{1/2}, \ \text{ and so } \  \house{\beta_j} < 2 C \house{\beta}^{1/2} + 1 \leq (2C + 1)\house{\beta}^{1/2}.$$ 
We define $B_{i+1} = \{ \beta_1,\dots, \beta_r \mid \beta \in B_i\} \subseteq \mc O^+$. 
(Note that above, the elements $\gamma_j$ and $\beta_j$ depend on $\beta$, and so it would be more precise to denote them $\gamma_j(\beta)$ and $\beta_j(\beta)$.)

It is then straightforward to see that for all sufficiently large $i$, we have $\house{\beta} < (2C+1)^2$ for every $\beta\in B_i$.

For the sake of contradiction, assume that $\mathcal N_{\house{\cdot}}(\mc O^+) = \infty$. Then there are only finitely many elements $\beta \in \mc O^+$ with $\house{\beta} < (2C+1)^2$. Let $M \subsetneq K$ be the number field generated by $F$ as well as the elements $\beta \in \mc O^+$ with $\house{\beta} < (2C+1)^2$. Then, for arbitrary $\alpha \in \mc O^+$, we have $B_i \subseteq \mc O_M^+$ for some sufficiently large $i$. From the construction of the sets $B_i$, we deduce iteratively that $B_0 \subseteq \mc O_M^+$, that is, $\alpha \in \mc O_M^+$. Since $\alpha\in\mc O^+$ was chosen arbitrarily, this means $K \subseteq M$, a contradiction with the assumption that $K$ has infinite degree over $\qq$. So we have $\mathcal N_{\house{\cdot}}(\mc O^+)<\infty$ as claimed.
\end{proof}
We obtain a large class of examples of totally real fields without universal quadratic lattice. To state these in the highest generality, let us generalize some notations which were introduced earlier.
For $d \in \nat$ and a number field $K$, we denote by $\multiBZ[K]{d}$ the compositum of all extensions of $K$ of degree at most $d$, and by $\multiBZab[K]{d}$ the maximal abelian subextension of $\multiBZ[K]{d}/K$.
\begin{thm}\label{thm:newmain-general}
Let $d$ be a positive integer, $K$ a number field, and $L$ a totally real subfield of $\multiBZab[K]{d}$ of infinite degree over $\qq$. Then there are no universal quadratic lattices over $L$.
\end{thm}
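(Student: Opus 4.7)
The plan is to deduce Theorem~\ref{thm:newmain-general} from \Cref{thm:3.1} by verifying that $\mc O_L^+$ has the Northcott property with respect to the house. The technical input is a version of the Bombieri--Zannier theorem relative to a number-field base, together with the comparison between the Weil height and the house recalled at the start of \Cref{sect:northcott}.

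First, I would invoke (the extension to an arbitrary number-field base of) the theorem of Bombieri--Zannier \cite{Bombieri-Zannier}: the field $\multiBZab[K]{d}$ has the Northcott property with respect to the logarithmic Weil height $h$. The version for $K = \qq$ is the statement quoted in the Introduction and surveyed in \cite{Widmer}; the same argument goes through over an arbitrary number field $K$, using that the degree-$d$ extensions of $K$ have uniformly bounded local degrees and that the relevant norm-index bounds feeding into the Bombieri--Zannier argument remain valid. In particular, since $L \subseteq \multiBZab[K]{d}$, the field $L$ also has the Northcott property with respect to $h$.

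Next, I would pass from the Weil height to the house on totally positive integers. For any totally real algebraic integer $\alpha \in \mc O_L^+$ with $\alpha \neq 0$, the inequality
\[ h(\alpha) \le \log \house{\alpha} \]
noted just before \Cref{thm:3.1} shows that any bound $\house{\alpha} < r$ implies $h(\alpha) < \log r$. Hence finiteness of $\{\alpha \in \mc O_L : h(\alpha) < \log r\}$ forces finiteness of $\{\alpha \in \mc O_L^+ : \house{\alpha} < r\}$, so $\mc O_L^+$ inherits the Northcott property with respect to the house.

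Finally, since $L$ is infinite over $\qq$ and $\mc O_L$ is a subring of $\mc O_L$ with $\Frac \mc O_L = L$, \Cref{thm:3.1} applies directly and yields the non-existence of universal quadratic $\mc O_L$-lattices, concluding the proof. The only real subtlety is confirming the Bombieri--Zannier statement over an arbitrary number field $K$ rather than over $\qq$; the remaining implications are a straightforward chaining of the results already established in Sections~\ref{sect:universal-lattices}--\ref{sect:northcott}.
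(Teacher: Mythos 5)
Your proof is correct and follows the paper's own argument exactly: invoke Bombieri--Zannier to get the Northcott property of $\multiBZab[K]{d}$ with respect to the Weil height, descend to the subfield $L$, pass to the house via $h(\alpha) \le \log\house{\alpha}$, and apply \Cref{thm:3.1}. The one hedge you flag as the ``only real subtlety'' is unnecessary: \cite[Theorem~1]{Bombieri-Zannier} is already stated over an arbitrary number-field base $K$, so no extension of their argument is required — the theorem as cited gives precisely the Northcott property of $\multiBZab[K]{d}$ with respect to the Weil height.
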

\begin{proof}
Bombieri and Zannier \cite[Theorem 1]{Bombieri-Zannier} showed that $\multiBZab[K]{d}$ has the Northcott property with respect to the Weil height.

If we have a totally real field $L \subseteq \multiBZab[K]{d}$, then $L$ has the Northcott property with respect to the Weil height (for this property is obviously preserved under taking subfields).
By the inequality $h(\alpha)\leq\log\house{\alpha}$ that is valid for all non-zero $\alpha\in\co_L^+$ we see that $\co_L^+$ has the Northcott property with respect to the house, and so we can apply Theorem \ref{thm:univ-Northcott}.
\end{proof}
\noindent We obtain Theorem $\ref{thm:newmain}$ as a quick corollary.
\begin{proof}[Proof of Theorem $\ref{thm:newmain}$]
Let $p$ be a prime number and $q=p$ or $q = p^2$. Every finite group of order $q$ is abelian, and so every Galois extension of $\qq$ of degree $q$ is abelian, and so also the compositum $\multi{q}$ is abelian and thus a subfield of $\multiBZab{q}$. Now we can apply \Cref{thm:newmain-general}.
\end{proof}

\begin{rem}
There exist many totally real fields $K$ with the Northcott property with respect to the Weil height and which are not contained in $\multiBZ{d}$ for any $d \in \nat$: in \cite[Theorem 1.2, Corollary 1.3]{Checcoli-Fehm} it is shown, building on a criterion developed in \cite{Widmer11}, that every countable product of finite solvable groups can be realised as the Galois group of a totally real Galois extension of $\qq$ with the Northcott property with respect to the Weil height.
Also over such fields, in view of \Cref{thm:3.1}, there exist no universal quadratic lattices.
\end{rem}

\section{Totally positive units and classical quadratic lattices}
For a totally real field $K$, $\tpu_K$ denotes the group of totally positive units.
The following two propositions are well-known in the case where $K/\qq$ is a finite extension, and we give a proof to verify that everything goes through also in the infinite case.

\begin{prop}\label{prop:splitting-units} Let $K/\qq$ be an algebraic field extension.
Let $(\Lambda, Q)$ be a classical quadratic $\mc{O}_K$-lattice and $v \in \Lambda$ such that $Q(v) \in \mc{O}_K^\times$.
Then there exists a sublattice $\Lambda' \subseteq \Lambda$ such that $\Lambda = \mc{O}_K v \perp \Lambda'$.
\end{prop}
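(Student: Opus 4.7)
The plan is to construct the orthogonal complement of $v$ explicitly and verify that it gives the desired orthogonal decomposition. Define
\[ \Lambda' = \{ w \in \Lambda \mid B_Q(v, w) = 0 \}, \]
which is clearly an $\mc{O}_K$-submodule of $\Lambda$. The claim is that $\Lambda = \mc{O}_K v \perp \Lambda'$.

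First I would construct an explicit retraction $\pi : \Lambda \to \mc{O}_K v$. For any $w \in \Lambda$, set
\[ \pi(w) = \frac{B_Q(v, w)}{Q(v)} v. \]
This lands in $\mc{O}_K v$ because the classical hypothesis gives $B_Q(v, w) \in \mc{O}_K$, and $Q(v) \in \mc{O}_K^\times$ is invertible. A short calculation using $B_Q(v, v) = Q(v)$ (which follows directly from the definition of $B_Q$) shows that $\pi(\alpha v) = \alpha v$ for $\alpha \in \mc{O}_K$, so $\pi$ is an $\mc{O}_K$-linear retraction. Its kernel is exactly $\Lambda'$, so as $\mc{O}_K$-modules we obtain $\Lambda = \mc{O}_K v \oplus \Lambda'$. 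Orthogonality is immediate: for $w \in \Lambda'$ and $\alpha \in \mc{O}_K$, $B_Q(\alpha v, w) = \alpha B_Q(v, w) = 0$.

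The one point where the infinite degree case deserves attention is showing that $\Lambda'$ is itself a lattice, i.e.~finitely generated as an $\mc{O}_K$-module. In the finite degree setting this is instant from Noetherianness, but in infinite extensions $\mc{O}_K$ need not be Noetherian. The saving observation is that $\Lambda'$ is a direct summand of $\Lambda$, namely $\Lambda' \cong \Lambda / \mc{O}_K v$, and quotients of finitely generated modules are finitely generated; thus $\Lambda'$ inherits finite generation from $\Lambda$ without any Noetherian hypothesis. This is really the only step where the infinite-degree assumption could have caused trouble, and handling it correctly is the main (mild) obstacle compared to the classical finite-degree proof.
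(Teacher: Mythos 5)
Your proof is correct and follows essentially the same approach as the paper: the same orthogonal complement $\Lambda' = \{w \in \Lambda \mid B_Q(v,w) = 0\}$, the same projection $w \mapsto B_Q(v,w)Q(v)^{-1}v$ (phrased as a retraction), with the classical hypothesis ensuring the coefficient lies in $\mc{O}_K$. The only mild divergence is in the finite-generation step: the paper explicitly projects a generating set $v_1,\ldots,v_k$ of $\Lambda$ to obtain generators $v_i' = v_i - B_Q(v_i,v)Q(v)^{-1}v$ of $\Lambda'$, whereas you invoke the abstract fact that $\Lambda'$, being a direct summand and hence a quotient of $\Lambda$, inherits finite generation; both are valid and circumvent Noetherianness in the same spirit.
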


\begin{proof}
We redo the proof of \cite[Proposition 3.4]{KalaSurvey}. Set $\Lambda' = \lbrace w \in \Lambda \mid B_Q(v, w) = 0 \rbrace$. We have that $\mc{O}_K v$ is orthogonal to $\Lambda'$ and that $\mc{O}_K v \cap \Lambda' = \lbrace 0 \rbrace$. We will show that $\Lambda = \mc{O}_Kv + \Lambda'$.

For this, let $z \in \Lambda$ arbitrary. Set $w = z - B_Q(z, v)Q(v)^{-1}v$. Since $(\Lambda, Q)$ is classical, we have $B_Q(z, v)Q(v)^{-1}\in\mc O_K$, so $w\in\Lambda$. Furthermore, we compute that $B_Q(v, w) = 0$, whereby $w \in \Lambda'$. Thus $z = B_Q(z, v)Q(v)^{-1}v + w \in \mc{O}_Kv + \Lambda'$ as desired.

It remains to show that $\Lambda'$ is finitely generated. Suppose $\Lambda$ is generated by $v_1, \ldots, v_k$. Set $v_i' = v_i - B(v_i, v)Q(v)^{-1}v$ for $i \in \lbrace 1, \ldots, k \rbrace$. By the argument above, we have $v_i' \in \Lambda'$, and it follows that $\Lambda'$ is generated by $v_1', \ldots, v_k'$.
\end{proof}

We are now ready to prove a more precise version of \Cref{prop:intro}.

\begin{prop}\label{prop:units-no-classical}
Let $K$ be a totally real field and $m \in \nat$. If $\lvert \tpu_K/\mc{O}_K^{\times 2} \rvert \geq m$, then every classical universal quadratic lattice over $K$ has rank at least $m$. 
\end{prop}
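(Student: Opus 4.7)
The plan is to prove the proposition by induction on $m$. The base case $m=1$ is immediate, since a universal lattice is necessarily nonzero. For the inductive step, assume $\lvert \tpu_K/\mc{O}_K^{\times 2}\rvert \geq m \geq 2$ and fix representatives $\epsilon_1 = 1, \epsilon_2, \ldots, \epsilon_m \in \tpu_K$ of distinct classes (we may take $\epsilon_1 = 1$ since $[1]$ is one of the classes). By universality there is some $v_1 \in \Lambda$ with $Q(v_1) = 1$, and \Cref{prop:splitting-units} then yields a decomposition $\Lambda = \mc{O}_K v_1 \perp \Lambda'$, where $\Lambda'$ is itself a classical positive definite $\mc{O}_K$-lattice, of rank one less than that of $\Lambda$.

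For each $i = 2, \ldots, m$, universality produces a representation $\epsilon_i = a_i^2 + Q(w_i)$ with $a_i \in \mc{O}_K$ and $w_i \in \Lambda'$. A quick check rules out $w_i = 0$: in that case $a_i^2 = \epsilon_i \in \mc{O}_K^\times$, forcing $a_i \in \mc{O}_K^\times$ (since $\mc{O}_K$ is integrally closed) and $[\epsilon_i] = [\epsilon_1] = [1]$, contradicting distinctness. Hence each $w_i$ is a nonzero element of $\Lambda'$. The aim is then to show that $w_2, \ldots, w_m$ are $K$-linearly independent in $\Lambda'\otimes_{\mc{O}_K} K$, which yields that $\Lambda'$ has rank at least $m-1$ and therefore that $\Lambda$ has rank at least $m$.

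The main obstacle is precisely this last step: the residual values $Q(w_i) = \epsilon_i - a_i^2$ need not be units, so one cannot simply close the induction by asserting that $\Lambda'$ represents $m-1$ distinct square classes of totally positive units and invoking a strengthened inductive hypothesis in terms of represented unit classes. For $m=2$ the obstacle is easily overcome by a direct Gram matrix argument: if $v_1, v_2$ were $K$-linearly dependent, then the Gram determinant $\epsilon_1 \epsilon_2 - B_Q(v_1, v_2)^2$ would vanish, forcing $B_Q(v_1, v_2) \in \mc{O}_K^\times$ (since its square is a unit in $\mc{O}_K$) and hence $[\epsilon_1] = [\epsilon_2]$, a contradiction. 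I expect the general case to be handled by an analogous determinantal or iterated splitting analysis, ruling out $K$-linear dependence among $v_1, \ldots, v_m$ from the distinctness of the classes $[\epsilon_i]$, and the cleanest route is likely to recast the inductive step so that it applies directly to $\Lambda'$ with the projected vectors $w_2, \ldots, w_m$ playing the role of the $v_i$.
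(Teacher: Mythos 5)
Your proposal correctly sets up the splitting strategy (represent a unit, split off a rank-one summand via \Cref{prop:splitting-units}, recurse), and you correctly identify the crux: after writing $\varepsilon_i = a_i^2 + Q(w_i)$ with $w_i \in \Lambda'$, the residual $Q(w_i)$ need not be a unit, so the naive induction stalls. But you leave that gap open, only sketching a Gram-determinant argument for $m=2$ and expressing the hope that it generalizes. It is not clear that a determinantal argument does generalize smoothly: with $m$ vectors $v_1, \ldots, v_m$ satisfying $Q(v_i) = \varepsilon_i$, linear dependence over $K$ gives vanishing of the Gram determinant, a polynomial relation in the off-diagonal entries $B_Q(v_i,v_j)$ that does not immediately produce a contradiction from the distinctness of the square classes. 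So as written the proposal is incomplete.

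The missing idea in the paper is the \emph{indecomposability of totally positive units}: a totally positive unit $\varepsilon$ of (the ring of integers of) a totally real number field cannot be written as a sum of two non-zero totally positive algebraic integers (see \cite[Lemma 2.1]{KalaYatsyna}). Applied at the inductive step, one writes $\varepsilon_k = \varepsilon_1 x_1^2 + \cdots + \varepsilon_{k-1}x_{k-1}^2 + Q(v_k)$ with $x_i \in \mc O_K$ and $v_k \in \Lambda_{k-1}'$; working inside the number field generated by the finitely many quantities involved, indecomposability forces all but one summand to vanish. If the surviving summand were $\varepsilon_i x_i^2$ for some $i < k$, then $x_i$ would be a unit (its square would be) and $\varepsilon_k$, $\varepsilon_i$ would lie in the same square class, contradicting the choice of representatives. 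Hence $Q(v_k) = \varepsilon_k$ is itself a unit, and \Cref{prop:splitting-units} splits off $\mc O_K v_k$ from $\Lambda_{k-1}'$, closing the induction. This is exactly the step your argument needs; without it (or an equivalent replacement) the proof does not go through.
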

\begin{proof}
Let $\varepsilon_1, \ldots, \varepsilon_m \in \tpu_K$ be representatives of distinct classes in $\tpu_K/\mc{O}_K^{\times 2}$.
Let $(\Lambda, Q)$ be a universal classical quadratic lattice.
We will show by induction on $k \in \lbrace 0, \ldots, m \rbrace$ that there exist $v_1, \ldots, v_k \in \Lambda$ with $Q(v_i) = \varepsilon_i$ and an $\mc{O}_K$-submodule $\Lambda_k'$ such that $\Lambda = \mc{O}_Kv_1 \perp \ldots \perp \mc{O}_Kv_k \perp \Lambda_k'$. This then implies in particular that $(\Lambda, Q)$ has rank at least $k$, establishing the proposition.

It remains to do the induction. For $k = 0$ there is nothing to show, and one may take $\Lambda_0' = \Lambda$.

Assume that $k > 0$; by the induction hypothesis, we may assume that $\Lambda = \mc{O}_K v_1 \perp \ldots \perp \mc{O}_Kv_{k-1} \perp \Lambda_{k-1}'$ where $Q(v_i) = \varepsilon_i$.
Since $(\Lambda, Q)$ is universal, it represents $\varepsilon_k$.
Hence there exist $x_1, \ldots, x_{k-1} \in \mc{O}_K$ and $v_k \in \Lambda_{k-1}'$ such that
$$ \varepsilon_k = Q(x_1v_1 + \ldots + x_{k-1}v_{k-1} + v_k) = \varepsilon_1x_1^2 + \ldots + \varepsilon_{k-1}x_{k-1}^2 + Q(v_k).$$
Let $K_0$ be the subfield of $K$ generated by the $\varepsilon_i, x_i$, and $Q(v_k)$.
Then $K_0$ is a number field.
Since $\varepsilon_k$ is a totally positive unit, it is \emph{indecomposable} in $K_0$, i.e.~it cannot be written as a sum of non-zero totally positive elements of $\mc{O}_{K_0}$ \cite[Lemma 2.1]{KalaYatsyna}.
Since $\varepsilon_k$ does not lie in the square class of $\varepsilon_i$ for any $i \in \lbrace 1, \ldots, k-1 \rbrace$, we conclude that we must have $\varepsilon_k = Q(v_k)$.

Since $(\Lambda, Q)$ is classical, the same holds for its sublattice $(\Lambda'_{k-1}, Q\vert_{\Lambda_{k-1}'})$.
By \Cref{prop:splitting-units} we have that $\Lambda_{k-1}' = \mc{O}_Kv_k \perp \Lambda_k'$ for some submodule $\Lambda_k' \subseteq \Lambda_{k-1}'$.
We see that indeed $\Lambda = \mc{O}_Kv_1 \perp \ldots \perp \mc{O}_Kv_k \perp \Lambda_k'$, finishing the induction.
\end{proof}

When $K/\qq$ is a totally real number field, the quantity $\lvert \tpu_K/\mc{O}_K^{\times 2} \rvert$ can be computed alternatively as follows.
Let $n = [K : \qq]$ and let $\tau_1, \ldots, \tau_n$ be the different embeddings $K \to \rr$.
Consider the map
$$ \sgn : \mc{O}_K^\times/\mc{O}_K^{\times 2} \to \lbrace \pm 1 \rbrace^n : a \mapsto (\sgn(\tau_1(a)), \ldots, \sgn(\tau_n(a)),$$
where, for a non-zero real number $r$, $\sgn(r) = 1$ if $r > 0$ and $\sgn(r) = -1$ if $r < 0$. This is a homomorphism of $\ff_2$-vector spaces, in particular, the image of the map is a subspace of $\lbrace \pm 1 \rbrace^n$. Then 
$$\lvert \tpu_K/\mc{O}_K^{\times 2} \rvert = 2^{n - \dim_{\ff_2}(\sgn(\mc{O}_K^\times/\mc{O}_K^{\times 2}))}.$$ 
As such, the quantity $\log_2 \lvert \tpu_K/\mc{O}_K^{\times 2} \rvert$ is often referred to as the \emph{unit signature rank deficiency} of $K$.
See e.g.~\cite[Section 2]{SignatureRanks1} for a discussion.

\begin{prop}\label{prop:totally-positive-finite-extension}
Let $L/K$ be a finite extension of totally real number fields.
Then $$\lvert \tpu_L / \mc{O}_L^{\times 2} \rvert \geq \lvert \tpu_K / \mc{O}_K^{\times 2} \rvert.$$
\end{prop}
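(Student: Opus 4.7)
The approach is to exploit the explicit formula for $\lvert \tpu_K/\mc{O}_K^{\times 2}\rvert$ in terms of signature rank that the paper records immediately before the proposition. Write $n = [K:\qq]$ and $m = [L:\qq]$, and set $r_K = \dim_{\ff_2} \sgn(\mc{O}_K^\times/\mc{O}_K^{\times 2})$ and analogously $r_L$. The formula then gives $\lvert \tpu_K/\mc{O}_K^{\times 2}\rvert = 2^{n-r_K}$ and $\lvert \tpu_L/\mc{O}_L^{\times 2}\rvert = 2^{m-r_L}$, so the statement is equivalent to the inequality $r_L - r_K \leq m - n$.

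Label the real embeddings of $K$ by $\tau_1, \ldots, \tau_n$, and let $\sigma_{i,1}, \ldots, \sigma_{i,d}$ be the $d = [L:K]$ real embeddings of $L$ extending $\tau_i$, using the pair $(i,j)$ to index the $m = nd$ real embeddings of $L$. The key step is to construct a linear map $\pi \colon \ff_2^m \to \ff_2^n$ that intertwines the sign map of $L$ with that of $K$ via the norm, defined by $\pi(s)_i = \sum_{j=1}^d s_{i,j}$ (sum in $\ff_2$). Reading the $\ff_2$-sum of bits as a product of $\pm 1$'s, we observe that for any $\eta \in \mc{O}_L^\times$,
\[
\pi(\sgn_L(\eta))_i \;=\; \prod_{j=1}^d \sgn(\sigma_{i,j}(\eta)) \;=\; \sgn\bigl(\tau_i(\Norm_{L/K}(\eta))\bigr),
\]
since $\prod_{j=1}^d \sigma_{i,j}(\eta) = \tau_i(\Norm_{L/K}(\eta))$ (the product ranges over all extensions of $\tau_i$ to $L$). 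In short, $\pi \circ \sgn_L = \sgn_K \circ \Norm_{L/K}$, and since $\Norm_{L/K}(\mc{O}_L^\times) \subseteq \mc{O}_K^\times$, this places the image $\pi(\sgn_L(\mc{O}_L^\times/\mc{O}_L^{\times 2}))$ inside $\sgn_K(\mc{O}_K^\times/\mc{O}_K^{\times 2})$, a subspace of dimension $r_K$.

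To conclude, note that $\pi \colon \ff_2^m \to \ff_2^n$ is surjective with kernel of dimension $m - n$. Applying rank-nullity to the restriction of $\pi$ to $\sgn_L(\mc{O}_L^\times/\mc{O}_L^{\times 2}) \subseteq \ff_2^m$ gives
\[
r_L \;=\; \dim \sgn_L(\mc{O}_L^\times/\mc{O}_L^{\times 2}) \;\leq\; (m-n) + r_K,
\]
which is exactly what is needed. I do not expect a real obstacle; the only step that requires care is the compatibility identity $\pi \circ \sgn_L = \sgn_K \circ \Norm_{L/K}$. Its virtue is that it lets the argument handle all $[L:K]$ uniformly, circumventing the fact that the naive map $\tpu_K/\mc{O}_K^{\times 2} \to \tpu_L/\mc{O}_L^{\times 2}$ induced by inclusion need not be injective (for instance, if $\delta \in \tpu_K$ is not a square but $L = K(\sqrt{\delta})$, then $\delta$ dies in $\tpu_L/\mc{O}_L^{\times 2}$).
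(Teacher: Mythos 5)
Your argument is correct. The paper itself does not supply a proof but simply cites \cite[Remark 1]{SignatureRanks1}; what you have written is a complete, self-contained argument, and it follows the same underlying idea as that reference: translate the claim into an inequality of signature ranks and use the norm map $\Norm_{L/K}$ to intertwine the two sign maps. The key identity $\pi\circ\sgn_L=\sgn_K\circ\Norm_{L/K}$ holds because the real embeddings of $L$ extending a fixed $\tau_i$ are exactly the $K$-embeddings of $L$ once $K$ is embedded via $\tau_i$, so $\tau_i(\Norm_{L/K}(\eta))=\prod_j\sigma_{i,j}(\eta)$; the surjectivity of $\pi$ and the dimension count of its kernel are immediate, and the rank--nullity step is applied correctly to the restriction of $\pi$ to $\sgn_L(\mc{O}_L^\times/\mc{O}_L^{\times 2})$. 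Your closing remark about why the naive inclusion-induced map $\tpu_K/\mc{O}_K^{\times 2}\to\tpu_L/\mc{O}_L^{\times 2}$ can fail to be injective is exactly the right motivation for taking the norm route rather than the inclusion route; note that \Cref{prop:totally-positive-odd-extension} handles precisely the complementary situation where that naive map \emph{is} injective, so the two propositions are genuinely different in spirit even though they look superficially similar.
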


\begin{proof}
See \cite[Remark 1]{SignatureRanks1}.
\end{proof}

\begin{prop}\label{prop:totally-positive-odd-extension}
Let $L/K$ be an  extension of totally real fields and assume that $\mc{O}_L^{\times 2} \cap K^\times = \mc{O}_K^{\times 2}$.
Then $\lvert \tpu_L / \mc{O}_L^{\times 2} \rvert \geq \lvert \tpu_K / \mc{O}_K^{\times 2} \rvert$.
In particular, this is the case when all finitely generated subextensions of $L/K$ have odd degree.
\end{prop}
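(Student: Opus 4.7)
My plan is to exhibit an injective group homomorphism $\tpu_K/\mc{O}_K^{\times 2} \hookrightarrow \tpu_L/\mc{O}_L^{\times 2}$, which immediately yields the inequality of cardinalities. First, I would observe that the inclusion $K \hookrightarrow L$ restricts to $\mc{O}_K^\times \hookrightarrow \mc{O}_L^\times$, since the inverse of a unit of $\mc{O}_K$ already lies in $\mc{O}_K \subseteq \mc{O}_L$; and total positivity is preserved because the conjugates of an algebraic number are intrinsic to it, not to the ambient field. Thus I get an inclusion $\tpu_K \hookrightarrow \tpu_L$, and since $\mc{O}_K^{\times 2} \subseteq \mc{O}_L^{\times 2}$, this descends to a homomorphism on the quotients. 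For injectivity, if $u \in \tpu_K$ satisfies $u = v^2$ for some $v \in \mc{O}_L^\times$, then $u \in \mc{O}_L^{\times 2} \cap K^\times = \mc{O}_K^{\times 2}$ by hypothesis, so $u$ is trivial in $\tpu_K/\mc{O}_K^{\times 2}$.

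For the ``in particular'' clause, I would verify the hypothesis $\mc{O}_L^{\times 2} \cap K^\times = \mc{O}_K^{\times 2}$ under the assumption that every finitely generated subextension of $L/K$ has odd degree. The inclusion $\supseteq$ is immediate. For $\subseteq$, given $u \in K^\times$ with $u = v^2$ for some $v \in \mc{O}_L^\times$, the field $K(v)$ is a finitely generated subextension of $L/K$, and so by hypothesis has odd degree; but $v$ satisfies $X^2 - u \in K[X]$, so $[K(v):K] \in \lbrace 1, 2 \rbrace$. Odd forces $[K(v):K] = 1$, hence $v \in K$. Then $v$ is an algebraic integer lying in $K$ (so $v \in \mc{O}_K$) whose inverse also lies in $K \cap \mc{O}_L = \mc{O}_K$, so $v \in \mc{O}_K^\times$ and $u \in \mc{O}_K^{\times 2}$.

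I do not anticipate any real obstacle; the entire argument is a definition-chase. The only point deserving a moment of care, in the infinite-degree setting, is checking that totally positive units of $K$ remain such in $L$, which boils down to the intrinsic nature of algebraic conjugates, together with the standard identity $K \cap \mc{O}_L = \mc{O}_K$ used to conclude $v \in \mc{O}_K^\times$ in the final step.
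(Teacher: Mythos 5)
Your proposal is correct and takes essentially the same approach as the paper: you establish that the restriction map $\tpu_K/\mc{O}_K^{\times 2} \to \tpu_L/\mc{O}_L^{\times 2}$ is injective under the stated hypothesis, and you verify that hypothesis in the odd-degree case via $[K(v):K]\in\{1,2\}$. The paper's proof is just a one-line version of the same argument, leaving the ``in particular'' clause implicit; your more detailed write-up is accurate throughout.
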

\begin{proof}
It suffices to observe that, under the hypothesis $\mc{O}_L^{\times 2} \cap K^\times = \mc{O}_K^{\times 2}$, the restriction homomorphism $\tpu_K / \mc{O}_K^{\times 2} \to \tpu_L / \mc{O}_L^{\times 2}$ is injective.
\end{proof}

\section{Units in quadratic fields}
In this section, we collect some standard facts about quadratic extensions of number fields, and also establish some notation.

For a finite field extension $L/K$, we denote by $\Norm_{L/K}$ and $\Tr_{L/K}$ the \textit{norm} and \textit{trace} maps $L \to K$.
We first give the following general Galois theoretic observation, which is an adaptation of \cite[Remark VII.3.9]{Lam}.
\begin{lem}\label{lem:quadratic-extension-units}
Let $L/K$ be a separable quadratic field extension and $\alpha \in L$ such that $\Norm_{L/K}(\alpha) = 1$.
Then there exists $\beta \in L$ such that $\alpha\beta^2 = \Tr_{L/K}(\alpha + 1)$.

In particular, we have $\Norm_{L/K}^{-1}(K^{\times 2}) = L^{\times 2} K^{\times}$.
\end{lem}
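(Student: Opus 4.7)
The plan is to give an explicit formula for $\beta$, and then deduce the ``in particular'' assertion by combining this with the obvious other inclusion.

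Let $\sigma$ be the nontrivial element of $\Gal(L/K)$, so that $\Norm_{L/K}(\alpha) = \alpha\sigma(\alpha) = 1$ and $\Tr_{L/K}(\alpha+1) = \alpha + \sigma(\alpha) + 2$. The first step is to guess and verify the choice $\beta = 1 + \sigma(\alpha)$. Expanding,
\[
\alpha\beta^2 = \alpha\bigl(1 + 2\sigma(\alpha) + \sigma(\alpha)^2\bigr) = \alpha + 2\alpha\sigma(\alpha) + \sigma(\alpha)\cdot\bigl(\alpha\sigma(\alpha)\bigr),
\]
and using $\alpha\sigma(\alpha) = 1$ this simplifies to $\alpha + \sigma(\alpha) + 2 = \Tr_{L/K}(\alpha+1)$. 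Note that this identity holds even in the degenerate case $\beta = 0$, which forces both sides to be zero.

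For the ``in particular'' part, the inclusion $L^{\times 2}K^\times \subseteq \Norm_{L/K}^{-1}(K^{\times 2})$ is immediate: for $c \in L^\times$ and $a \in K^\times$ we have $\Norm_{L/K}(c^2 a) = \Norm_{L/K}(c)^2 a^2 \in K^{\times 2}$. For the converse, suppose $\Norm_{L/K}(x) = a^2$ with $a \in K^\times$. Then $\alpha := x/a$ satisfies $\Norm_{L/K}(\alpha) = 1$, so by the first part there is some $\beta \in L$ with $\alpha\beta^2 = \Tr_{L/K}(\alpha+1) \in K$.

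If $\Tr_{L/K}(\alpha+1) \ne 0$, then $\beta \in L^\times$ and $\alpha = \Tr_{L/K}(\alpha+1)\cdot\beta^{-2} \in K^\times L^{\times 2}$, so $x = a\alpha \in K^\times L^{\times 2}$ as desired. The remaining obstacle — which is the only subtle point — is the case $\Tr_{L/K}(\alpha+1) = 0$. Here the relations $\alpha\sigma(\alpha) = 1$ and $\sigma(\alpha) = -2-\alpha$ combine to give $(\alpha+1)^2 = 0$, forcing $\alpha = -1 \in K^\times$, and then $x = -a \in K^\times \subseteq L^{\times 2}K^\times$. This completes the argument.
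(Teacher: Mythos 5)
Your proof is correct and follows essentially the same route as the paper's: the same explicit choice $\beta = 1 + \sigma(\alpha)$ (the paper writes $\beta = \alpha' + 1$ with $\alpha'$ the conjugate), the same verification, and the same normalization $\alpha := x/a$ for the ``in particular'' part. One small point in your favour: the paper deduces $\alpha\gamma^{-1} \in L^{\times 2}K^\times$ directly from the first part without remarking on the degenerate case $\Tr_{L/K}(\alpha+1) = 0$ (i.e.\ $\beta = 0$), whereas you correctly notice that this must be excluded to divide by $\beta^2$, and resolve it cleanly by showing it forces $\alpha = -1 \in K^\times$.
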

\begin{proof}
Let $\alpha'$ denote the conjugate of $\alpha$ in $L$ and set $\beta = \alpha' + 1$.
We compute that
$$ \alpha \beta^2 = \alpha \alpha'^2 + 2\alpha \alpha' + \alpha = \alpha + \alpha' + 2 = \Tr_{L/K}(\alpha + 1)$$
as desired.

We prove the second part of the lemma. Clearly we have $\Norm_{L/K}(L^{\times 2} K^\times) \subseteq K^{\times 2}$. For the reverse inclusion, we suppose $\Norm_{L/K}(\alpha) = \gamma^2$ for some $\alpha\in L$, $\gamma\in K$. Then we have $\Norm_{L/K}(\alpha\gamma^{-1}) = 1$. By the first part of the lemma, this says $\alpha\gamma^{-1} \in L^{\times 2} K^\times$, and hence $\alpha \in L^{\times 2} K^\times$. So we have $\Norm_{L/K}^{-1}(K^{\times 2}) = L^{\times 2} K^{\times}$ as claimed.
\end{proof}

\begin{propdef}\label{propdef:quadratic-extensions}
Let $D \in \nat_{\geq 2}$ be square-free and set $K = \qq(\sqrt{D})$.
\begin{enumerate}
\item The \emph{discriminant} of $K$ is the integer
$$\Delta_K = \begin{cases}
D &\text{ if } D \equiv 1 \pmod 4 \\
4D &\text{ if } D \equiv 2, 3 \pmod 4
\end{cases}.$$
\item There exists a unique element $\varepsilon_K \in \mc{O}_K^\times$ such that $\lbrace -1, \varepsilon_K \rbrace$ generates the group $\mc{O}_K^\times$ and such that $\varepsilon_K > 1$, called the \emph{fundamental unit} of $K$. If $D\equiv 2,3 \pmod{4}$ (resp. $D\equiv 1 \pmod{4}$), this fundamental unit is given by $x + y\sqrt{D}$ where $(x, y)$ is the smallest positive solution in $\nat^2$ (resp. $(\frac 12\nat)^2$) to the equation $X^2 - DY^2 =  \pm1$.
The following are equivalent:
\begin{enumerate}[(i)]
\item\label{it:qext-1} $\varepsilon_K$ is totally positive,
\item\label{it:qext-2} $\tpu_K \neq \mc{O}_K^{\times 2}$,
\item\label{it:qext-3} $\Norm_{K/\qq}(\varepsilon_K) = 1$,
\item\label{it:qext-4} the equation $X^2 - DY^2 = -1$ has no solution in $\zz^2$.
\end{enumerate}
In particular, the above holds when $p \mid D$ for some prime $p \equiv 3 \pmod 4$.
\item Assuming $\Norm_{K/\qq}(\varepsilon_K) = 1$, let $\delta_K \in \zz$ be the unique square-free integer such that $\Tr_{K/\qq}(\varepsilon_K + 1) \delta_K \in \qq^{\times 2}$.
We have $\delta_K\varepsilon_K \in K^{\times 2}$.
Furthermore, $\delta_K \mid \Delta_K$, and $\delta_K \neq 1, \Delta_K$.
\end{enumerate}
\end{propdef}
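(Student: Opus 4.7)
Part (1) is a routine determinant computation once one takes the standard integral bases: $\{1, \sqrt D\}$ for $D \equiv 2, 3 \pmod 4$, and $\{1, (1+\sqrt D)/2\}$ for $D \equiv 1 \pmod 4$. For the existence in (2), Dirichlet's unit theorem gives $\mc{O}_K^\times \cong \{\pm 1\} \times \zz$ (real quadratic fields have unit rank one); the unique generator $\varepsilon_K > 1$ of the free factor is the fundamental unit. Writing $\varepsilon_K = x + y\sqrt D$ with $x, y \in \zz$ (or both half-integers when $D \equiv 1 \pmod 4$) and imposing $x^2 - Dy^2 = \pm 1$ (respectively $\pm 4$) yields the Pell-style description.

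For the equivalences in (2), let $\varepsilon_K'$ be the Galois conjugate of $\varepsilon_K$. Since $\varepsilon_K \varepsilon_K' = \Norm_{K/\qq}(\varepsilon_K) \in \{\pm 1\}$ and $\varepsilon_K > 0$, one gets $\varepsilon_K' > 0 \Leftrightarrow \Norm_{K/\qq}(\varepsilon_K) = 1$, which is (i) $\Leftrightarrow$ (iii). For (i) $\Rightarrow$ (ii), $\varepsilon_K \in \tpu_K$, but $\varepsilon_K \ne \eta^2$ for $\eta \in \mc{O}_K^\times$, since then $\eta = \pm \varepsilon_K^m$ would force $\varepsilon_K^{2m-1} = 1$. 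The contrapositive of (ii) $\Rightarrow$ (i) goes through enumeration of units $\pm \varepsilon_K^n$ when $\varepsilon_K' < 0$: the elements $-\varepsilon_K^n$ are never totally positive, and $\varepsilon_K^n$ is totally positive iff $n$ is even, so $\tpu_K = \mc{O}_K^{\times 2}$. The direction (iii) $\Rightarrow$ (iv) is direct (an integer solution produces a unit of norm $-1$). For (iv) $\Rightarrow$ (iii), if $\varepsilon_K \in \zz[\sqrt D]$ we are done; otherwise $D \equiv 1 \pmod 4$ with $\varepsilon_K = (a + b\sqrt D)/2$, $a, b$ odd, and an elementary computation using $a^2 \equiv b^2 \equiv 1 \pmod 8$ yields $\varepsilon_K^3 \in \zz[\sqrt D]$, whose norm $-1$ supplies the integer solution. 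The ``in particular'' clause is immediate: if $p \equiv 3 \pmod 4$ divides $D$, then $X^2 - DY^2 = -1$ reduces to $X^2 \equiv -1 \pmod p$, which has no solutions.

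For (3), set $t = \Tr_{K/\qq}(\varepsilon_K + 1) = \varepsilon_K + \varepsilon_K' + 2 > 0$ (using $\varepsilon_K, \varepsilon_K' > 0$), and let $\delta_K > 0$ be the unique squarefree integer with $t \delta_K \in \qq^{\times 2}$, say $t = \delta_K m^2$ with $m \in \zz$. By \Cref{lem:quadratic-extension-units} applied to $\alpha = \varepsilon_K$ (which has norm one) there is $\beta \in K$ with $\varepsilon_K \beta^2 = t$, whence $\delta_K \varepsilon_K = (\delta_K m/\beta)^2 \in K^{\times 2}$. For $\delta_K \mid \Delta_K$: at any prime $\mathfrak p$ of $\mc{O}_K$ above a rational prime $p$ unramified in $K/\qq$, the ramification index is $1$ so $v_\mathfrak p(\delta_K) = v_p(\delta_K)$, while $v_\mathfrak p(\delta_K \varepsilon_K) = v_\mathfrak p(\delta_K)$ must be even (as $\delta_K \varepsilon_K$ is a square and $\varepsilon_K$ is a unit); squarefreeness of $\delta_K$ then forces $v_p(\delta_K) = 0$, so $\delta_K$ is supported on ramified primes, all dividing $\Delta_K$, and $\delta_K \mid \Delta_K$ by squarefreeness. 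For $\delta_K \neq 1$: otherwise $\varepsilon_K \in K^{\times 2}$, and integral closure of $\mc{O}_K$ gives $\varepsilon_K = \eta^2$ in $\mc{O}_K^\times$, contradicting that $\varepsilon_K$ generates $\mc{O}_K^\times$ modulo torsion. For $\delta_K \neq \Delta_K$: when $D \equiv 2, 3 \pmod 4$ this is automatic since $\Delta_K = 4D$ is not squarefree; when $D \equiv 1 \pmod 4$ and $\Delta_K = D$, the equality $\delta_K = D$ combined with $D = (\sqrt D)^2 \in K^{\times 2}$ would give $\varepsilon_K \in K^{\times 2}$, yielding the same contradiction.

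The main obstacle is the direction (iv) $\Rightarrow$ (iii) in the half-integer case $D \equiv 1 \pmod 4$, where one must verify that $\varepsilon_K^3$ lands in $\zz[\sqrt D]$; this is where either the mod-$8$ bookkeeping or the classical fact $[\mc{O}_K^\times : \zz[\sqrt D]^\times] \in \{1, 3\}$ for real quadratic orders is genuinely used.
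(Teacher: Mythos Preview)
Your argument for parts (1) and (2) follows the paper's approach closely. One small point on the contrapositive $\neg\eqref{it:qext-3}\Rightarrow\neg\eqref{it:qext-4}$: the hint ``using $a^2\equiv b^2\equiv 1\pmod 8$'' is not by itself enough to force $\varepsilon_K^3\in\zz[\sqrt D]$, since for $D\equiv 1\pmod 8$ one finds $a^2+3Db^2\equiv 4\pmod 8$. The paper handles this by a case split: for $D\equiv 5\pmod 8$ the cube lands in $\zz[\sqrt D]$, while for $D\equiv 1\pmod 8$ a mod-$8$ look at $a^2-Db^2=-4$ shows the half-integer case cannot occur at all. Alternatively, if in your computation of $\varepsilon_K^3$ you also feed in the norm relation $a^2-Db^2=-4$, the coefficients become $a(Db^2-1)/2$ and $b(Db^2-3)/2$, which are integral for all odd $D$; so your claim is correct, but the justification should invoke the norm equation (or the index fact you mention) rather than oddness of $a,b$ alone.

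Your treatment of part (3) is a genuine addition over the paper, which simply cites \cite{SignatureRanks2} and \cite{Kubota} for $\delta_K\mid\Delta_K$ and $\delta_K\neq 1,D$. Your valuation argument---that $\delta_K\varepsilon_K\in K^{\times 2}$ forces $v_{\mf p}(\delta_K)$ to be even at every prime of $\mc O_K$, hence $v_p(\delta_K)=0$ at unramified rational primes, so $\delta_K$ is supported on the ramified primes dividing $\Delta_K$---is a clean self-contained proof. Likewise your exclusions $\delta_K\neq 1$ and $\delta_K\neq \Delta_K$ (the latter via $\Delta_K$ non-squarefree when $D\equiv 2,3\pmod 4$, and via $D\in K^{\times 2}$ when $D\equiv 1\pmod 4$) are correct and more explicit than the paper's citation. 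This buys independence from the external references at essentially no cost.
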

\begin{proof}
The existence, uniqueness and computation of the fundamental unit is well-known and can be found in any book on the Pell equation, see e.g.~\cite[Sections 1.3, 4.3]{SolvingPell}.
We briefly discuss the four equivalences in (2).

$\eqref{it:qext-1} \Rightarrow \eqref{it:qext-2},\eqref{it:qext-3},\eqref{it:qext-4}$:
It follows from the defining property that $\varepsilon_K \not\in \mc{O}_K^{\times 2}$, so if $\varepsilon_K$ is totally positive, then $\varepsilon_K \in \tpu_K \setminus \mc{O}_K^{\times 2}$.
Denoting by $\sigma$ the non-trivial automorphism of $K$, then we have $\varepsilon_K, \sigma(\varepsilon_K) > 0$ and hence $0 < \varepsilon_K\sigma(\varepsilon_K) = \Norm_{K/\qq}(\varepsilon_K) \in \lbrace \pm 1 \rbrace$, i.e.~$\Norm_{K/\qq}(\varepsilon_K)=1$.
Finally, it follows that there exist no elements of $K$ with norm $-1$, which translates to the equation $X^2 - DY^2 = -1$ not having a solution in $\zz^2$.

$\neg \eqref{it:qext-1} \Rightarrow \neg\eqref{it:qext-2},\neg\eqref{it:qext-3},\neg\eqref{it:qext-4}$:
Assume that  $\varepsilon_K$ is not totally positive.
Since every element of $\mc{O}_K^{\times}$ is of the form $\pm \varepsilon_K^i$ for $i \in \zz$, we infer that $\tpu_K = \mc{O}_K^{\times 2}$.
Further, we have $0 > \varepsilon_K\sigma(\varepsilon_K) = \Norm_{K/\qq}(\varepsilon_K) \in \lbrace \pm 1 \rbrace$, which implies $\Norm_{K/\qq}(\varepsilon_K) = -1$.
If $\varepsilon_K \in \zz[\sqrt{D}]$, then $X^2 - DY^2 = -1$ has a solution in $\zz^2$.
If $\varepsilon_K \not\in \zz[\sqrt{D}]$ and $D\equiv 5\pmod 8$, then one quickly computes that $\varepsilon_K^3$ gives a solution in $\zz^2$ to $X^2 - DY^2 = -1$.
Finally, when $D\equiv 1\pmod 8$, then a mod $8$  consideration of the equation
$X^2 - DY^2 = -4$ shows that $\varepsilon_K \in \zz[\sqrt{D}]$.

For the statements in (3), the fact that $\delta_K\varepsilon_K \in K^{\times 2}$ follows from \Cref{lem:quadratic-extension-units}.
The fact that $\delta_K \mid \Delta_K$ and $\delta_K \neq 1, D$ is \cite[Proposition 1]{SignatureRanks2} (essentially \cite[Hilfssatz 8]{Kubota}).
\end{proof}
\begin{rem}
For a quadratic extension $K = \qq(\sqrt{D})$, the element $\delta_K$ can also be computed as follows: by Hilbert's Theorem 90, there exists $\rho \in \mc{O}_K$ such that $\varepsilon = \rho\sigma(\rho)^{-1}$, where $\sigma$ is the non-trivial automorphism of $K$.
Normalize $\rho$ so that it is not divisible by a rational prime.
Then $\delta_K = \Norm_{K/\qq}(\rho)$. \cite[Hilfssatz 8]{Kubota}
\end{rem}

The above result can already be used to construct examples of infinite \emph{multiquadratic fields}, i.e.~compositums of quadratic number fields, with infinite $\tpu_K / \mc{O}_K^{\times 2}$ (and thus without universal classical quadratic lattices by \Cref{prop:intro}), since we can test whether a fundamental unit $\varepsilon_K$ of a quadratic extension $K/\qq$ becomes a square in a field extension $L/K$: it is necessary and sufficient that $\delta_K$ is a square in $L$.
In particular, if $K$ is a number field and $L = K(\sqrt{D} \mid D \in S)$ for some set $S \subseteq \nat$, then an element $d \in K$ becomes a square in $L$ if and only if $d = (\prod_{D \in S_0} D) e^2$ for some $S_0 \subseteq S$ finite and $e \in K$ (by basic Kummer Theory, see e.g.~\cite[Proposition 10]{SignatureRanks2}).
\begin{eg}
Let $(n_k)_{k \in \nat}$ be a sequence of non-zero natural numbers such that the sequence $(4n_k^2 - 1)_{k \in \nat}$ consists of elements which are pairwise coprime and such that $2n_k+1$ is not a rational square for $k \in \nat$.
Consider the field $K = \qq(\sqrt{4n_k^2 - 1} \mid k \in \nat)$.
We claim that $\tpu_K/\mc{O}_K^{\times 2}$ is infinite.
In particular, by \Cref{prop:intro}, there exist no classical universal quadratic lattices over $K$.

To see this, we observe that, for $k \in \nat$, we have $\varepsilon_k = 2n_k + \sqrt{4n_k^2 - 1} \in \mc{O}_K^\times$ and, by \Cref{lem:quadratic-extension-units}, $\varepsilon_k$ lies in the same square class as $2(2n_k+1)$.
One verifies that, for $k \neq l$, neither $2(2n_k+1)$ nor $(2n_k+1)(2n_l+1)$ is a square in $K$.
Hence, $(\varepsilon_k)_{k \in \nat}$ is an infinite family of representatives of different classes in $\tpu_K / \mc{O}_K^{\times 2}$, showing that the latter set is infinite.

To find a sequence $(n_k)_{k \in \nat}$ with the desired properties, one can, for example, take $n_0 = 1$ and recursively choose $n_k$ to be any multiple of $\prod_{l < k} (4n_l^2 - 1)$ such that $2n_k + 1$ is not a square.
\end{eg}
\begin{eg}[adaptation of {\cite[Theorem 12]{SignatureRanks2}}]
Let $(q_i)_{i \in \nat}$ be a sequence of distinct primes congruent to $3 \pmod 4$.
For $i \in \nat$, let $K_i = \qq(\sqrt{q_{2i}q_{2i+1}})$.
Let $K$ be the compositum of all $K_i$.
Then $\tpu_K/\mc{O}_K^{\times 2}$ is infinite.

Indeed, by \Cref{propdef:quadratic-extensions}, for $K_i$, we compute that $\Delta_{K_i} = q_{2i}q_{2i+1}$, whence $\delta_{K_i}$ is either $q_{2i}$ or $q_{2i+1}$.
But every square-free integer which becomes a square in $K$ is either divisible by both $q_{2i}$ and $q_{2i+1}$, or is divisible by neither.
Hence, $\delta_{K_i}, \delta_{K_i}\delta_{K_j} \in \tpu_K \setminus \mc{O}_K^{\times 2}$ for all $i \neq j$, whereby $ \tpu_K/\mc{O}_K^{\times 2}$ is infinite.
\end{eg}
In both of the examples above, it is crucial that we have an infinite number of totally positive units in quadratic subextensions of our infinite multiquadratic field $K$ which do not become squares in $K$.
However, in view of \Cref{lem:quadratic-extension-units}, every totally positive unit lying in a real quadratic field becomes a square in $\multi{2}$, so such a technique is not applicable for showing that $\tpu_{\multi{2}}/\mc{O}_{\multi{2}}^{\times 2}$ is infinite.
In the next section, we look further and investigate totally positive units in biquadratic extensions of $\qq$.

\section{Units in biquadratic fields}\label{sect:biquadratic}
By a \emph{biquadratic (number) field}, we mean a field of the form $\qq(\sqrt{D_1}, \sqrt{D_2})$ for $D_1, D_2 \in \nat$ such that $D_1, D_2$, and $D_1D_2$ are not squares.
The field $\qq(\sqrt{D_1}, \sqrt{D_2})$ is then a Galois extension of $\qq$ with Galois group $(\zz/2\zz)^2$; its quadratic subfields are precisely $\qq(\sqrt{D_1})$, $\qq(\sqrt{D_2})$ and $\qq(\sqrt{D_1D_2})$.
\begin{prop}\label{prop:biquadratic-Galois}
Let $K$ be a biquadratic number field with quadratic subfields $K_1, K_2, K_3$, and $\mu \in K^\times$.
Then $\mu^2 = \Norm_{K/\qq}(\mu)^{-1} \prod_{i=1}^3 \Norm_{K/K_i}(\mu)$.
In particular, if $\mu \in \mc{O}_K^\times$, then $\mu^2 = \pm \prod_{i=1}^3 \Norm_{K/K_i}(\mu) \in \prod_{i=1}^3 \mc{O}_{K_i}^\times$.
\end{prop}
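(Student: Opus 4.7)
The plan is to identify the three non-trivial elements of the Galois group $G = \Gal(K/\qq)$ with the generators of $\Gal(K/K_i)$ for $i = 1,2,3$, and then read off both sides of the identity as products over $G$-orbits. Since $K$ is biquadratic, $G \cong (\zz/2\zz)^2$ has exactly three elements of order two, which we label $\sigma_1, \sigma_2, \sigma_3$ so that $\sigma_i$ is the unique non-trivial element fixing $K_i$ pointwise.

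With this labelling the norm formulas are
\[
\Norm_{K/K_i}(\mu) = \mu \cdot \sigma_i(\mu), \qquad \Norm_{K/\qq}(\mu) = \mu \cdot \sigma_1(\mu) \cdot \sigma_2(\mu) \cdot \sigma_3(\mu),
\]
where for the second formula we use that $\{\id, \sigma_1, \sigma_2, \sigma_3\}$ enumerates $G$. Multiplying the three partial norms together I would obtain
\[
\prod_{i=1}^3 \Norm_{K/K_i}(\mu) \;=\; \mu^3 \cdot \sigma_1(\mu)\sigma_2(\mu)\sigma_3(\mu) \;=\; \mu^2 \cdot \Norm_{K/\qq}(\mu),
\]
and dividing by $\Norm_{K/\qq}(\mu) \in K^\times$ (which is a unit in $\qq^\times$, hence non-zero) yields the stated identity.

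For the ``in particular'' clause, if $\mu \in \mc{O}_K^\times$, then $\Norm_{K/\qq}(\mu) \in \mc{O}_\qq^\times = \{\pm 1\}$, so the identity specializes to $\mu^2 = \pm \prod_{i=1}^3 \Norm_{K/K_i}(\mu)$; and each $\Norm_{K/K_i}(\mu)$ lies in $\mc{O}_{K_i}^\times$, since the norm of a unit (resp.\ an algebraic integer) is again a unit (resp.\ an algebraic integer) in the base ring. There is no real obstacle here---the only point requiring care is ensuring the $\sigma_i$ are matched correctly with the subfields $K_i$, which is forced by the structure of $G$.
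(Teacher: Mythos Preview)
Your proof is correct and follows essentially the same argument as the paper: label the non-trivial Galois elements $\sigma_i$ by the subfields they fix, expand the norms as products over Galois orbits, and observe that $\prod_i \Norm_{K/K_i}(\mu) = \mu^2 \Norm_{K/\qq}(\mu)$; the ``in particular'' clause is handled identically via $\mc{O}_\qq^\times = \{\pm 1\}$.
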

\begin{proof}
For $i \in \lbrace 1, 2, 3 \rbrace$, denote by $\sigma_i$ the automorphism of $K$ with fixed field $K_i$.
By Galois Theory we have $\Norm_{K/K_i}(\mu) = \mu \sigma_i(\mu)$ and $\Norm_{K/\qq}(\mu) = \mu \sigma_1(\mu) \sigma_2(\mu) \sigma_3(\mu)$.
So we may write
$$ \mu^2 = \frac{(\mu \sigma_1(\mu))(\mu \sigma_2(\mu))(\mu \sigma_3(\mu))}{\mu\sigma_1(\mu)\sigma_2(\mu)\sigma_3(\mu)} = \frac{\Norm_{K/K_1}(\mu)\Norm_{K/K_2}(\mu)\Norm_{K/K_3}(\mu)}{\Norm_{K/\qq}(\mu)}. $$
To deduce the second statement from this, we recall that norms of units are units, and thus $\Norm_{K/\qq}(\mu) \in \mc{O}^\times_\qq = \lbrace \pm 1 \rbrace$.
\end{proof}
\begin{rem}\label{prop:totally-positive-quadratic-subfields}
Note that, if $K/\qq$ is a totally real Galois extension, then an element $\alpha \in K$ is totally positive or totally negative if and only if $\alpha \sigma(\alpha) > 0$ for all $\sigma \in \Gal(K/\qq)$.
\end{rem}
\begin{cor}\label{cor:biquadratic-new-units}
Let $K$ be a biquadratic number field with quadratic subfields $K_1, K_2, K_3$.
Assume that at least one of $K_1, K_2, K_3$ contains a totally positive non-square unit.
Let $\alpha \in \mc{O}_K^\times$, and assume that $\alpha$ is not totally negative.
The following are equivalent:
\begin{enumerate}
\item\label{it:biquad-1} $\alpha \in \tpu_K$ and $\alpha = \varepsilon_1 \varepsilon_2 \varepsilon_3$ for $\varepsilon_i \in \mc{O}_{K_i}^\times$,
\item\label{it:biquad-2} $\Norm_{K/K_i}(\alpha) \in K_i^{\times 2}$ for each $i \in \lbrace 1, 2, 3 \rbrace$,
\item\label{it:biquad-3} $\alpha \in K^{\times 2} \qq^\times$.
\end{enumerate}
Furthermore, in this case, $\varepsilon_1, \varepsilon_2, \varepsilon_3$ in \eqref{it:biquad-1} can be chosen to be all totally positive.
\end{cor}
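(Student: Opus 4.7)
My strategy is to prove (1)$\Rightarrow$(2) and (3)$\Rightarrow$(2) as routine norm computations, and then derive (1) (including the totally positive refinement) and (3) simultaneously from (2). The hypothesis on totally positive non-square units will enter at a single crucial point, ruling out a bad sign scenario.

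For (1)$\Rightarrow$(2), I will expand
\[ \Norm_{K/K_i}(\alpha) = \varepsilon_i^2 \cdot \Norm_{K_j/\qq}(\varepsilon_j) \Norm_{K_k/\qq}(\varepsilon_k), \]
using that $\tau_i$ fixes $\varepsilon_i$ and restricts to the non-trivial automorphism of $K_j$ and of $K_k$ for $\{i,j,k\} = \{1,2,3\}$. Total positivity of $\alpha$ (and hence of $\Norm_{K/K_i}(\alpha)$) forces the $\pm 1$ factor to be $+1$, so $\Norm_{K/K_i}(\alpha) = \varepsilon_i^2 \in K_i^{\times 2}$. The implication (3)$\Rightarrow$(2) is the immediate computation $\Norm_{K/K_i}(q\gamma^2) = (q\Norm_{K/K_i}(\gamma))^2$.

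For (2)$\Rightarrow$(1)+(3), I will write $\Norm_{K/K_i}(\alpha) = \beta_i^2$ with $\beta_i \in K_i^\times$; since $\alpha \in \mc{O}_K^\times$, in fact $\beta_i \in \mc{O}_{K_i}^\times$. Then \Cref{prop:biquadratic-Galois} gives $\alpha^2 = \Norm_{K/\qq}(\alpha)^{-1}(\beta_1\beta_2\beta_3)^2$, and since $\Norm_{K/\qq}(\alpha) = \Norm_{K_i/\qq}(\beta_i)^2$ is a positive unit in $\zz$, it equals $1$. So $\alpha = \pm\beta_1\beta_2\beta_3$, and absorbing the sign into $\beta_1$ gives $\alpha = \beta_1\beta_2\beta_3$. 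Comparing this with the direct expansion $\alpha\tau_i(\alpha) = \beta_i^2 \Norm_{K_j/\qq}(\beta_j)\Norm_{K_k/\qq}(\beta_k)$ yields $\Norm_{K_j/\qq}(\beta_j)\Norm_{K_k/\qq}(\beta_k) = 1$ for every pair, producing a common value $N' = \Norm_{K_i/\qq}(\beta_i) \in \{\pm 1\}$.

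The hypothesis now intervenes: some $K_i$ contains a totally positive non-square unit, so by \Cref{propdef:quadratic-extensions} the fundamental unit of $K_i$—and hence every element of $\mc{O}_{K_i}^\times$—has norm $+1$, forcing $N' = +1$. Consequently $\beta_i \bar\beta_i = 1 > 0$ at both real embeddings of $K_i$, so each $\beta_i$ is either totally positive or totally negative. Then $\alpha = \beta_1\beta_2\beta_3$ has constant sign across all four real embeddings of $K$, and since $\alpha$ is not totally negative, it is totally positive, giving $\alpha \in \tpu_K$. An even number of the $\beta_i$'s is totally negative; flipping signs of exactly those yields totally positive $\varepsilon_i \in \mc{O}_{K_i}^\times$ with $\alpha = \varepsilon_1\varepsilon_2\varepsilon_3$, establishing (1) together with the furthermore statement. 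For (3), \Cref{lem:quadratic-extension-units} applied to $K_i/\qq$ shows that $\Norm_{K_i/\qq}(\beta_i) = 1 \in \qq^{\times 2}$ implies $\beta_i \in K_i^{\times 2}\qq^\times$; multiplying the three expressions gives $\alpha \in K^{\times 2}\qq^\times$. The main obstacle is identifying and justifying this single use of the hypothesis—if $N' = -1$ were allowed, each $\beta_i$ would be a mixed-sign unit of norm $-1$, and no factorization of $\alpha$ into totally positive units in the $K_i$'s could exist.
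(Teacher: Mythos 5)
Your proof is correct, and it uses the same key ingredients (Proposition \ref{prop:biquadratic-Galois}, Lemma \ref{lem:quadratic-extension-units}, Proposition \ref{propdef:quadratic-extensions}, and the observation about norms and total positivity) but arranges the implications differently from the paper. The paper proves $(2)\Rightarrow(1)$ and $(3)\Rightarrow(2)$ as quick steps and places all the substance---the norm computation, the sign analysis, the invocation of the hypothesis, and the ``furthermore'' statement---in the direction $(1)\Rightarrow(2)\,\&\,(3)$. You instead make $(1)\Rightarrow(2)$ and $(3)\Rightarrow(2)$ the quick steps and put the substance in $(2)\Rightarrow(1)\,\&\,(3)$: you extract $\beta_i\in\mc{O}_{K_i}^\times$ with $\Norm_{K/K_i}(\alpha)=\beta_i^2$, show $\alpha=\pm\beta_1\beta_2\beta_3$ via Proposition \ref{prop:biquadratic-Galois}, deduce that all three $\Norm_{K_i/\qq}(\beta_i)$ share a common value $N'\in\{\pm1\}$, and use the hypothesis exactly once to rule out $N'=-1$. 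The payoff of your arrangement is structural: the ``furthermore'' claim (that the $\varepsilon_i$ in (1) can be chosen totally positive) falls out automatically at the point where the decomposition in (1) is actually constructed, since $N'=1$ forces each $\beta_i$ to be totally positive or totally negative, and the sign-flipping argument is then immediate. The paper instead establishes (1) from (2) without the total positivity refinement and supplies the refinement separately in the $(1)\Rightarrow(2)\,\&\,(3)$ step, which is logically fine but slightly less tidy. Both proofs have the same depth; yours is a valid and arguably cleaner reorganization.
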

\begin{proof}
$\eqref{it:biquad-2} \Rightarrow \eqref{it:biquad-1}$: 
In view of \Cref{prop:totally-positive-quadratic-subfields}, $\alpha$ is totally positive.
Since $\Norm_{K/\qq}(\alpha) = 1$, \Cref{prop:biquadratic-Galois} immediately yields that $\alpha = \prod_{i=1}^3 \sqrt{\Norm_{K/K_i}(\alpha)}$, whence we may set $\varepsilon_i = \sqrt{\Norm_{K/K_i}(\alpha)}$.

$\eqref{it:biquad-3} \Rightarrow \eqref{it:biquad-2}$:
If $\alpha = \beta^2 q$ for $\beta \in K^\times$ and $q \in \qq^\times$, then $\Norm_{K/K_i}(\alpha) = \Norm_{K/K_i}(\beta)^2 q^2 \in K_i^{\times 2}$ for each $i \in \lbrace 1, 2, 3 \rbrace$.

$\eqref{it:biquad-1} \Rightarrow \eqref{it:biquad-2}\& \eqref{it:biquad-3}$:
We compute that
\begin{align*}
\Norm_{K/K_1}(\alpha) &= \Norm_{K/K_1}(\varepsilon_1 \varepsilon_2 \varepsilon_3) = \Norm_{K/K_1}(\varepsilon_1) \Norm_{K/K_1}(\varepsilon_2) \Norm_{K/K_1}(\varepsilon_3) \\
&= \varepsilon_1^{ 2} \Norm_{K_2/\qq}(\varepsilon_2) \Norm_{K_3/\qq}(\varepsilon_3) \in \pm K_1^{\times 2}.
\end{align*}
Since $\alpha$ is totally positive, the same must hold for $\Norm_{K/K_1}(\alpha)$, whence $\Norm_{K/K_1}(\alpha) \in K_1^{\times 2}$.
By symmetry, $\Norm_{K/K_2}(\alpha) \in K_2^{\times 2}$ and $\Norm_{K/K_3}(\alpha) \in K_3^{\times 2}$.
This shows that \eqref{it:biquad-2} holds. We also infer that $\Norm_{K_1/\qq}(\varepsilon_1) = \Norm_{K_2/\qq}(\varepsilon_2) = \Norm_{K_3/\qq}(\varepsilon_3)$.
By the assumption that at least one of $K_1, K_2, K_3$ contains a totally positive non-square unit, we cannot have that $\Norm_{K_1/\qq}(\varepsilon_1) = \Norm_{K_2/\qq}(\varepsilon_2) = \Norm_{K_3/\qq}(\varepsilon_3) = -1$ (see \Cref{propdef:quadratic-extensions}), so we must have $\Norm_{K_1/\qq}(\varepsilon_1) = \Norm_{K_2/\qq}(\varepsilon_2) = \Norm_{K_3/\qq}(\varepsilon_3)=1$.
In view of \Cref{prop:totally-positive-quadratic-subfields} we may thus assume without loss of generality that all $\varepsilon_i$ are totally positive.
Furthermore, $\varepsilon_i \in K_i^{\times 2} \qq^\times$ by \Cref{lem:quadratic-extension-units}.
We thus conclude that \eqref{it:biquad-3} also holds.
\end{proof}
We saw in \Cref{lem:quadratic-extension-units} that for a quadratic number field $K$, one has $\mc{O}_K^{+, \times} \subseteq K^{\times 2} \qq^\times$.
In the rest of this section, we consider examples of biquadratic number fields $K$ where $\mc{O}_K^{+, \times} \subseteq K^{\times 2} \qq^\times$ does not hold, and an example where it does hold.
\begin{cor}\label{cor:biquadratic-new-units-eg}
Let $K$ be a biquadratic number field with quadratic subfields $K_1, K_2, K_3$.
Suppose that
\begin{enumerate}
\item there exists $i \in \lbrace 1, 2, 3 \rbrace$ such that $\tpu_{K_i} \neq \mc{O}_{K_i}^{\times 2}$,
\item for each $i \in \lbrace 1, 2, 3 \rbrace$ we have $\tpu_{K_i} \subseteq K^{\times 2}$.
\end{enumerate}
Then $\tpu_K \not\subseteq K^{\times 2} \qq^\times$.
\end{cor}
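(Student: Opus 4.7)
The plan is to argue by contradiction. Suppose $\tpu_K \subseteq K^{\times 2}\qq^\times$; I will derive the two incompatible statements $\tpu_K = \mc{O}_K^{\times 2}$ and $\tpu_K \neq \mc{O}_K^{\times 2}$.

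For the equality, I take arbitrary $\alpha \in \tpu_K$. Since our hypothesis (1) is precisely the running assumption of \Cref{cor:biquadratic-new-units}, the implication \eqref{it:biquad-3}$\Rightarrow$\eqref{it:biquad-1} applied to $\alpha \in K^{\times 2}\qq^\times$ yields a factorisation $\alpha = \varepsilon_1\varepsilon_2\varepsilon_3$ with each $\varepsilon_i \in \tpu_{K_i}$. Hypothesis (2) then places each $\varepsilon_i$ in $K^{\times 2}$, so $\alpha \in K^{\times 2}\cap \mc{O}_K^\times = \mc{O}_K^{\times 2}$ (using that $\mc{O}_K$ is integrally closed in $K$); the reverse inclusion $\mc{O}_K^{\times 2}\subseteq\tpu_K$ is automatic.

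For the inequality, I use (1) alone via a signature parity argument. Pick $i$ with $\varepsilon_{K_i}$ totally positive; by \Cref{propdef:quadratic-extensions} (equivalence \eqref{it:qext-1}$\Leftrightarrow$\eqref{it:qext-3}) we have $\Norm_{K_i/\qq}(\varepsilon_{K_i}) = 1$, so every element of $\mc{O}_{K_i}^\times = \langle -1, \varepsilon_{K_i}\rangle$ has $K_i/\qq$-norm equal to $+1$. By transitivity of norms, every $\alpha \in \mc{O}_K^\times$ satisfies $\Norm_{K/\qq}(\alpha) = 1$. Since this norm is the product of $\alpha$ over the four real embeddings of $K$, the signature $\sgn(\alpha) \in \{\pm 1\}^4$ always has even parity. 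Hence the induced homomorphism $\mc{O}_K^\times/\mc{O}_K^{\times 2} \to (\zz/2)^4$ takes values in the even-parity subspace of $\ff_2$-dimension $3$, whereas the source has $\ff_2$-dimension $4$ by Dirichlet's unit theorem. Thus the kernel $\tpu_K/\mc{O}_K^{\times 2}$ has $\ff_2$-dimension at least $1$, contradicting the first step.

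The main obstacle is the first step, where one must carefully match (1) with the running assumption of \Cref{cor:biquadratic-new-units} and verify that the factors provided by the corollary lie in $\tpu_{K_i}$ so that (2) can be applied. The parity argument in the second step is elementary once one observes that the presence of a totally positive fundamental unit in any quadratic subfield rules out any norm $-1$ unit of $\mc{O}_K^\times$.
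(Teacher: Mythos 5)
Your proof is correct and the core idea is the same as the paper's: apply the implication \eqref{it:biquad-3}$\Rightarrow$\eqref{it:biquad-1} of \Cref{cor:biquadratic-new-units} to factor a totally positive unit (assumed to lie in $K^{\times 2}\qq^\times$) as $\varepsilon_1\varepsilon_2\varepsilon_3$ with $\varepsilon_i \in \tpu_{K_i}$, then use hypothesis (2) to push the factors into $K^{\times 2}$.

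The one place you diverge is how you establish $\tpu_K \neq \mc{O}_K^{\times 2}$. The paper simply invokes \Cref{prop:totally-positive-finite-extension} (which is a citation to the literature) to produce an $\alpha \in \tpu_K \setminus \mc{O}_K^{\times 2}$ up front, and then shows that this particular $\alpha$ cannot lie in $K^{\times 2}\qq^\times$. You instead reorganise the contradiction around the identity $\tpu_K = \mc{O}_K^{\times 2}$ and derive its negation directly, via a signature-parity argument: the totally positive fundamental unit in some $K_i$ forces $\Norm_{K/\qq}$ to be identically $+1$ on $\mc{O}_K^\times$ (by transitivity of norms), so the sign homomorphism $\mc{O}_K^\times/\mc{O}_K^{\times 2} \to \{\pm 1\}^4$ has image in the even-parity hyperplane of dimension $3$, while Dirichlet gives $\dim_{\ff_2}\mc{O}_K^\times/\mc{O}_K^{\times 2} = 4$, so the kernel $\tpu_K/\mc{O}_K^{\times 2}$ is nontrivial. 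This makes your proof self-contained at the cost of invoking Dirichlet's unit theorem explicitly; in effect you reprove the relevant instance of \Cref{prop:totally-positive-finite-extension} from scratch. Both routes are fine; the paper's is shorter because it can lean on the cited proposition, yours makes the signature-rank mechanism visible.
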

\begin{proof}
By the first condition, we have (without loss of generality) that $\tpu_{K_1} \neq \mc{O}_{K_1}^{\times 2}$.
By \Cref{prop:totally-positive-finite-extension} it follows that there exists $\alpha \in \tpu_K \setminus \mc{O}_K^{\times 2}$.
Suppose, for the sake of a contradiction, that $\alpha \in K^{\times 2}\qq^\times$.
By \Cref{cor:biquadratic-new-units}, $\alpha = \varepsilon_1\varepsilon_2\varepsilon_3$ for $\varepsilon_i \in \tpu_{K_i}$.
But we assumed that $\tpu_{K_i} \subseteq K^{\times 2}$, contradicting the assumption that $\alpha \not\in \mc{O}_K^{\times 2}$. Hence $\alpha \in \tpu_K \setminus K^{\times 2} \qq^\times$.
\end{proof}
\begin{eg}
Let $p$ be a prime with $p \equiv 3 \pmod 4$ and consider $K = \qq(\sqrt{2}, \sqrt{p})$.
In $K_1=\qq(\sqrt{2})$ there are no totally positive non-square units.
In $K_2=\qq(\sqrt{p})$ and $K_3=\qq(\sqrt{2p})$ there are respective totally positive fundamental units $\varepsilon_2$ and $\varepsilon_3$ with corresponding $\delta_2, \delta_3 \in\{2,2p\} \subseteq K^{\times 2}$; in particular it follows that $\varepsilon_2, \varepsilon_3 \in K^{\times 2}$.
Hence \Cref{cor:biquadratic-new-units-eg} applies, and we conclude that $\tpu_K \not\subseteq K^{\times 2}\qq^\times$.
\end{eg}
\begin{prop}\label{prop:biquadratic-family}
Let $n \in \nat$ be such that $n \equiv 1 \pmod{12}$, and such that each of
$$ d_1 = n(n+1), \quad d_2 = 3n(3n+4), \quad d_3 = (3n+3)(3n+4) $$
are square-free integers.
Let $K = \qq(\sqrt{d_1}, \sqrt{d_2})$.
Then
$$ \mu = \frac{3n+4}{2} + \frac{3}{2}\sqrt{d_1} + \frac{1}{2}\sqrt{d_2} + \frac{1}{2}\sqrt{d_3} \in \tpu_K \setminus K^{\times 2} \qq^\times.$$
\end{prop}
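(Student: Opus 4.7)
The plan is to verify in turn the four claims $\mu\in\mc O_K$, $\mu\in\mc O_K^\times$, $\mu$ totally positive, and $\mu\not\in K^{\times 2}\qq^\times$. Everything will rest on one central identity, which I verify first. Let $\sigma_i\in\Gal(K/\qq)$ denote the nontrivial element fixing $K_i=\qq(\sqrt{d_i})$. From the definitions one checks the product relations $d_2d_3=9(3n+4)^2 d_1$, $d_1d_3=(n+1)^2 d_2$, and $d_1d_2=n^2 d_3$, so that in particular $\sqrt{d_2}\sqrt{d_3}=3(3n+4)\sqrt{d_1}$. My first claim is that $\mu\sigma_1(\mu)=1$. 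Writing
\[ \mu\sigma_1(\mu)=\bigl(\tfrac{3n+4}{2}+\tfrac{3}{2}\sqrt{d_1}\bigr)^2-\bigl(\tfrac{1}{2}\sqrt{d_2}+\tfrac{1}{2}\sqrt{d_3}\bigr)^2, \]
the $\sqrt{d_1}$-cross-terms cancel by the relation just noted, and the rational part equals $\tfrac14\bigl((3n+4)^2+9d_1-d_2-d_3\bigr)$, which simplifies to $1$ by a direct expansion ($18n^2+33n+16$ on each side).

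From this identity the first three claims follow quickly. Indeed, $\mu$ satisfies the monic polynomial $X^2-\bigl((3n+4)+3\sqrt{d_1}\bigr)X+1$ over $K_1$ with coefficients in $\mc O_{K_1}$, so $\mu\in\mc O_K$; and $\sigma_1(\mu)=\mu^{-1}$ shows $\mu\in\mc O_K^\times$. For total positivity, $\mu>0$ is clear from its definition and $\sigma_1(\mu)=1/\mu>0$; applying $\sigma_2$ to $\mu\sigma_1(\mu)=1$ yields $\sigma_2(\mu)\sigma_3(\mu)=1>0$, so these two conjugates share a sign, and their sum $(3n+4)-3\sqrt{d_1}$ is positive since $(3n+4)^2-9n(n+1)=15n+16>0$. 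Hence $\mu\in\tpu_K$.

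To rule out $\mu\in K^{\times 2}\qq^\times$, I appeal to \Cref{cor:biquadratic-new-units}. Its hypothesis holds: since $n\equiv 1\pmod{12}$, the integer $3n+4\equiv 3\pmod 4$ has a prime factor $p\equiv 3\pmod 4$, and such a $p$ divides $d_2$, so $\tpu_{K_2}\neq\mc O_{K_2}^{\times 2}$ by \Cref{propdef:quadratic-extensions}. It therefore suffices to exhibit some $i$ with $\Norm_{K/K_i}(\mu)\not\in K_i^{\times 2}$. I take $i=2$. An analogous expansion to the one above, using $\sqrt{d_1 d_3}=(n+1)\sqrt{d_2}$, gives
\[ \Norm_{K/K_2}(\mu)=\mu\sigma_2(\mu)=\tfrac{(3n+2)+\sqrt{d_2}}{2}, \]
a totally positive unit in $\mc O_{K_2}$ of $\qq$-norm $1$. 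Applying \Cref{lem:quadratic-extension-units} (concretely, $\alpha(1+\sigma(\alpha))^2=\Tr_{K_2/\qq}(\alpha+1)=3n+4$), its square class in $K_2^\times$ is represented by $3n+4$. By Kummer, $3n+4\in K_2^{\times 2}$ if and only if one of $3n+4$ or $(3n+4)d_2=3n(3n+4)^2$ is a rational square; the first fails because $7$ is not a square modulo $12$, and the second reduces to $3n\in\qq^{\times 2}$, which fails because $n\equiv 1\pmod 3$ forces $3\,\Vert\,3n$. Hence $\Norm_{K/K_2}(\mu)\not\in K_2^{\times 2}$, and \Cref{cor:biquadratic-new-units} gives $\mu\not\in K^{\times 2}\qq^\times$.

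The main obstacle is purely computational: verifying the two norm identities $\mu\sigma_1(\mu)=1$ and $\mu\sigma_2(\mu)=\tfrac{(3n+2)+\sqrt{d_2}}{2}$ requires careful tracking of the three product relations among $\sqrt{d_1},\sqrt{d_2},\sqrt{d_3}$. Once those are in place, the remainder of the proof is a routine chain of reductions via \Cref{lem:quadratic-extension-units}, \Cref{propdef:quadratic-extensions}, \Cref{cor:biquadratic-new-units}, and Kummer-theoretic facts about quadratic extensions.
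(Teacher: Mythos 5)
Your proof is correct and follows essentially the same route as the paper's: compute the three relative norms $\Norm_{K/K_i}(\mu)$, verify they are totally positive units (whence $\mu\in\tpu_K$ via the Galois criterion), and then invoke \Cref{cor:biquadratic-new-units} after observing that $\Norm_{K/K_2}(\mu)=\frac{(3n+2)+\sqrt{d_2}}{2}\notin K_2^{\times 2}$. The one genuine divergence is in this last step: the paper asserts that $\frac{(3n+2)+\sqrt{d_2}}{2}$ is the \emph{fundamental} unit of $K_2$ (and hence a non-square), leaving the verification to the reader, whereas you identify its $K_2$-square class directly via \Cref{lem:quadratic-extension-units} as that of $3n+4$ and then rule out $3n+4\in K_2^{\times2}$ by the mod-$12$ and $3\|3n$ observations; your route is more self-contained and avoids needing to know that the unit is fundamental. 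One very small slip: in your verification of $\mu\sigma_1(\mu)=1$, the parenthetical ``$18n^2+33n+16$ on each side'' is inaccurate — one has $(3n+4)^2+9d_1=18n^2+33n+16$ but $d_2+d_3=18n^2+33n+12$, so the numerator is $4$ (not $0$), which after division by $4$ correctly gives $1$; the conclusion is right, only the annotation is off.
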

\begin{proof}
For $i \in \lbrace 1, 2, 3 \rbrace$, let $K_i = \qq(\sqrt{d_i})$.
Then $K_1, K_2, K_3$ are the quadratic subfields of $K$.
We compute that
$$ \Norm_{K/K_1}(\mu) = 1, \quad \Norm_{K/K_2}(\mu) = \frac{3n+2 + \sqrt{d_2}}{2}, \quad \Norm_{K/K_3}(\mu) = 6n+7 + 2\sqrt{d_3}.$$
These are all totally positive units, hence by \Cref{prop:totally-positive-quadratic-subfields} $\mu$ is also a totally positive unit.
On the other hand, one computes that $\frac{3n+2 + \sqrt{d_2}}{2}$ is the fundamental unit of $K_2$ and hence a non-square, whence we obtain by \Cref{cor:biquadratic-new-units} that $\mu \not\in K^{\times 2} \qq^\times$.
\end{proof}
We conclude the section with an example of a biquadratic field $K$ with $\mc{O}_K^{\times 2} \subsetneq \tpu_K \subseteq K^{\times 2}\qq^\times$. 
\begin{eg}
Let $K = \qq(\sqrt{3}, \sqrt{7})$.
We show that, even though all quadratic subfields have totally positive non-square units, we have $\tpu_K \subseteq K^{\times 2}\qq^\times$. This shows that the second condition in \Cref{cor:biquadratic-new-units-eg} is necessary. 

We compute the following:
{
\begin{center}
\begin{tabular}{r||c|c|c}
& $K_1 = \qq(\sqrt{3})$ & $K_2 = \qq(\sqrt{7})$ & $K_3 = \qq(\sqrt{21})$ \\ \hline
fundamental unit & $\varepsilon_1 = 2 + \sqrt{3}$ & $\varepsilon_2 = 8+3\sqrt{7}$ & $\varepsilon_3 = \frac{5+\sqrt{21}}2$ \\
$\delta$ & $\delta_1 = 6$ & $\delta_2 = 2$ & $\delta_3 = 7$
\end{tabular}
\end{center}
}
We infer that $\varepsilon_1\varepsilon_2$ and $\varepsilon_3$ are squares in $K$, whereas $\varepsilon_1$ and $\varepsilon_2$ are not.
By \Cref{prop:biquadratic-Galois} and the fact that $\lbrace -1, \varepsilon_i \rbrace$ generates $\mc{O}_{K_i}^\times$ for $i \in \lbrace 1, 2, 3 \rbrace$, it follows that every element of $\mc{O}_K^\times$ is of the form $\pm\sqrt{\varepsilon_1\varepsilon_2}^k \sqrt{\varepsilon_3}^l \varepsilon_1^m$ for some $k, l, m \in \nat \cup \{ 0 \}$.
Since we know by \Cref{lem:quadratic-extension-units} that $\tpu_{K_i} \subseteq K^{\times 2}\qq^\times$, to show that $\tpu_K \subseteq K^{\times 2}\qq^\times$, it remains to show that $\pm\sqrt{\varepsilon_1\varepsilon_2}, \pm\sqrt{\varepsilon_3}, \pm\sqrt{\varepsilon_1\varepsilon_2\varepsilon_3} \not\in \tpu_K$.

To this end, one computes that
\begin{align*}
\sqrt{\varepsilon_1\varepsilon_2} &= \frac{3+3\sqrt{3} + \sqrt{7}+\sqrt{21}}{2}, \quad \sqrt{\varepsilon_3} = \frac{\sqrt{3}+\sqrt{7}}{2},\\
\sqrt{\varepsilon_1\varepsilon_2\varepsilon_3} &= \frac{8+5\sqrt{3}+3\sqrt{7}+2\sqrt{21}}{2},
\end{align*}
and observes that none of them are totally positive or totally negative.
\end{eg}

\section{Infinite multiquadratic fields}\label{sec:multi}
For a field $K$ and two extensions $K_1, K_2$ of $K$ contained within a fixed common overfield $\tilde{K}$, we denote by $K_1K_2$ the \emph{compositum of $K_1$ and $K_2$}, i.e.~the smallest subfield of $\tilde{K}$ which contains both $K_1$ and $K_2$.
For a family $(K_i)_{i \in I}$ of field extensions of $K$ contained in $\tilde{K}$, we similarly define its compositum to be the smallest subfield of $\tilde{K}$ containing $K_i$ for all $i \in I$.

Recall that a family $(K_i)_{i \in \nat}$ of Galois extensions of $\qq$ (seen as subfields of $\cc$) is \emph{linearly disjoint} if for every $n \in \nat$, we have $K_1K_2\cdots K_n \cap K_{n+1} = \qq$.
\begin{prop}\label{prop:quadratic-lin-disjoint}
Let $K$ be a field.
Let $K_1/K$ be a Galois extension and $K_2/K$ an algebraic extension such that, within a fixed algebraic closure of $K$, we have $K_1 \cap K_2 = K$.
Denote by $K_1K_2$ the compositum of $K_1$ and $K_2$ over $K$.
We have
$$ K_2 \cap (K_1K_2)^{\times 2}K^\times = K_2^{\times 2} K^\times. $$
\end{prop}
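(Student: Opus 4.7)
The inclusion $K_2^{\times 2} K^\times \subseteq K_2 \cap (K_1K_2)^{\times 2} K^\times$ is immediate. For the nontrivial inclusion, the plan is to start with $\alpha \in K_2$ written as $\alpha = \beta^2 k$ for some $\beta \in (K_1K_2)^\times$ and $k \in K^\times$, and then use the Galois action of $\Gal(K_1K_2/K_2)$ on $\beta$ to show that $\beta^2$ already lies in $K_2^{\times 2}K^\times$. Since $K_1/K$ is Galois and $K_1 \cap K_2 = K$, standard Galois theory gives that $K_1K_2/K_2$ is Galois and the restriction map $\Gal(K_1K_2/K_2) \to \Gal(K_1/K)$ is a (continuous) isomorphism.

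For any $\sigma \in \Gal(K_1K_2/K_2)$, we have $\sigma(\alpha) = \alpha$ (as $\alpha \in K_2$) and $\sigma(k) = k$ (as $k \in K$), so $\sigma(\beta)^2 = \beta^2$, whence $\sigma(\beta)/\beta \in \{\pm 1\}$. The assignment $\chi : \sigma \mapsto \sigma(\beta)/\beta$ is a continuous homomorphism $\Gal(K_1K_2/K_2) \to \{\pm 1\}$. I would split into two cases based on whether $\chi$ is trivial.

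If $\chi$ is trivial, then $\beta$ is fixed by $\Gal(K_1K_2/K_2)$, so $\beta \in K_2$ and $\alpha = \beta^2 k \in K_2^{\times 2}K^\times$. If $\chi$ is nontrivial, then via the restriction isomorphism it pulls back to a nontrivial character of $\Gal(K_1/K)$, whose kernel cuts out a quadratic subextension $K(\sqrt{d}) \subseteq K_1$ with $d \in K^\times$ (this is the step that genuinely uses that $K_1/K$ is Galois and that the characteristic is not $2$, which is the case of interest). The fixed field of $\ker \chi$ in $K_1K_2$ is then $K_2(\sqrt{d})$, and $\beta$ lies in this field. Writing $\beta = a + b\sqrt{d}$ with $a, b \in K_2$, and applying any $\sigma$ with $\chi(\sigma) = -1$ (so $\sigma(\sqrt{d}) = -\sqrt{d}$), the identity $\sigma(\beta) = -\beta$ forces $a = 0$. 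Thus $\beta = b\sqrt{d}$, giving $\alpha = \beta^2 k = b^2 dk \in K_2^{\times 2}K^\times$.

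The main point requiring care is the identification of the quadratic subextension inside $K_1$: one needs that the restriction isomorphism $\Gal(K_1K_2/K_2) \xrightarrow{\sim} \Gal(K_1/K)$ transports the fixed field of $\ker \chi$ in $K_1K_2$ to $K_2 \cdot F$, where $F$ is the fixed field of the image of $\ker \chi$ in $\Gal(K_1/K)$. This is standard for finite Galois extensions and extends to our situation because $\beta$ lies in $K_1'K_2$ for some finite Galois subextension $K_1'/K$ of $K_1/K$, so $\chi$ factors through the finite group $\Gal(K_1'K_2/K_2) \cong \Gal(K_1'/K)$, and the Galois correspondence can be applied there.
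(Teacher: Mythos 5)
Your proof is correct. The paper itself does not give an argument for this proposition; it simply cites \cite[Theorem VII.7.2]{Lam}, so there is no in-text proof to compare against. Your self-contained argument is the natural one: starting from $\alpha=\beta^2 k$ with $\alpha\in K_2^\times$, $\beta\in(K_1K_2)^\times$, $k\in K^\times$, the map $\chi\colon\Gal(K_1K_2/K_2)\to\{\pm 1\}$, $\sigma\mapsto\sigma(\beta)/\beta$ is indeed a well-defined continuous homomorphism (it is well-defined because $\sigma(\beta)^2=\beta^2$, and multiplicativity follows since $\chi(\tau)\in K$ is fixed by $\sigma$), and the dichotomy on $\chi$ trivial/nontrivial gives $\beta\in K_2$ or $\beta\in K_2(\sqrt d)$ with $d\in K^\times$ as you describe. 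Two minor remarks. First, the statement does not actually fail in characteristic $2$: there $\sigma(\beta)^2=\beta^2$ forces $\sigma(\beta)=\beta$, so $\chi$ is automatically trivial and one is always in the first case; your caveat that the quadratic subextension argument ``genuinely uses'' $\operatorname{char}\neq 2$ is therefore harmless, since that branch is vacuous in characteristic $2$. Second, the closing reduction to a finite Galois subextension $K_1'/K$ containing enough of $K_1$ to define $\beta$ is exactly the right way to justify applying the finite Galois correspondence (equivalently, one can note that $\chi$ is continuous with open kernel, so $\ker\chi$ corresponds to a finite subextension under the infinite Galois correspondence).
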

\begin{proof}
See \cite[Theorem VII.7.2]{Lam}.
\end{proof}

\begin{prop}\label{prop:composita-units-multiquadratic}
Let $(K_i)_{i \in \nat}$ be a linearly disjoint family of Galois extensions of $\qq$, and let $K$ be the compositum of $(K_i)_{i \in \nat}$.
Assume that there is an infinite subset $I \subseteq \nat$ such that, for all $i \in I$, $K_i$ is a biquadratic number field with $\tpu_{K_i} \not\subseteq K_i^{\times 2} \qq^\times$.
Then $\tpu_K/\mc{O}_K^{\times 2}$ is infinite.
\end{prop}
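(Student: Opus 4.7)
For each $i \in I$ choose $\alpha_i \in \tpu_{K_i} \setminus K_i^{\times 2}\qq^\times$. I will show that for every finite non-empty $S \subseteq I$, the product $\alpha_S := \prod_{i \in S}\alpha_i$ does not lie in $K^{\times 2}\qq^\times$. Since $\mc O_K$ is integrally closed, one easily checks $\tpu_K \cap K^{\times 2}\qq^\times = \mc O_K^{\times 2}$, so the $2^{|I|}$ subset-products will represent pairwise distinct classes in $\tpu_K/\mc O_K^{\times 2}$: if $\alpha_S$ and $\alpha_{S'}$ agree modulo $\mc O_K^{\times 2}$, then $\alpha_{S \triangle S'} \in \mc O_K^{\times 2} \subseteq K^{\times 2}\qq^\times$ forces $S=S'$. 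Hence $\tpu_K/\mc O_K^{\times 2}$ is infinite.

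\textbf{Reduction and Galois descent.} Write $S = \{i_1, \ldots, i_n\}$, $L = K_{i_1}\cdots K_{i_n}$, with $K_{i_j} = \qq(\sqrt{a_{i_j}},\sqrt{b_{i_j}})$, and let $L'$ be the compositum of the remaining $K_j$'s. Linear disjointness yields $L \cap L' = \qq$ and $LL' = K$, and \Cref{prop:quadratic-lin-disjoint} reduces the problem to ruling out $\alpha_S \in L^{\times 2}\qq^\times$. Suppose for contradiction $\alpha_S = \gamma^2 q$ with $\gamma \in L^\times$ and $q \in \qq^\times$. For each $k$, via \Cref{cor:biquadratic-new-units} pick a quadratic subfield $F_k = \qq(\sqrt{c_k}) \subset K_{i_k}$ with $\gamma_k := N_{K_{i_k}/F_k}(\alpha_{i_k}) \notin F_k^{\times 2}$; by \Cref{lem:quadratic-extension-units} and \Cref{propdef:quadratic-extensions} we have $\gamma_k \in \delta_k F_k^{\times 2}$ for a canonical squarefree positive integer $\delta_k$ satisfying $\delta_k \mid \Delta_{F_k}$ and $\delta_k \notin \{1, \Delta_{F_k}\}$. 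Setting $M_k := \prod_{j \neq k} K_{i_j}$ and applying $N_{L/F_k M_k}$ to $\alpha_S = \gamma^2 q$ yields $\gamma_k \cdot \alpha_{S \setminus \{i_k\}}^2 = N_{L/F_k M_k}(\gamma)^2 q^2$, so $\gamma_k \in F_k \cap (F_k M_k)^{\times 2}$. A standard Kummer analysis of the quadratic extensions of $F_k$ inside $F_k M_k$, combined with \Cref{prop:quadratic-lin-disjoint}, identifies this intersection with $F_k^{\times 2} D_{M_k}$, where $D_{M_k} = \langle a_{i_j}, b_{i_j} : j \neq k\rangle \subseteq \qq^\times/\qq^{\times 2}$. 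Comparing with $\gamma_k \in \delta_k F_k^{\times 2}$ forces $\delta_k \in D_{M_k}\langle c_k\rangle$ in $\qq^\times/\qq^{\times 2}$.

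\textbf{The main obstacle.} The final step is to derive a contradiction from the containment $\delta_k \in D_{M_k}\langle c_k\rangle$. The squarefree integer $\delta_k$ has prime factors only in $\{2\} \cup \text{(prime factors of }c_k)$ (since $\delta_k \mid \Delta_{F_k}$), while $D_{M_k}$ is generated by the $a_{i_j}, b_{i_j}$ for $j \neq k$; by linear disjointness these generators together with $\{a_{i_k}, b_{i_k}\}$ are $\ff_2$-independent in $\qq^\times/\qq^{\times 2}$. This forces $\delta_k$ to be expressible as $c_k^\epsilon \prod_{j\neq k}a_{i_j}^{x_j} b_{i_j}^{y_j}$ modulo squares, with severe arithmetic constraints coming from \Cref{propdef:quadratic-extensions} (in particular $\delta_k \neq c_k$ when $c_k \equiv 1 \pmod 4$, so $\Delta_{F_k} = c_k$). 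I expect to complete the argument by a careful prime-by-prime analysis of this expression, possibly aggregated across multiple $k$'s via a joint iterated-norm argument, and by exploiting the flexibility in choosing $\alpha_{i_k}$ (or of $F_k$ when several witnesses exist). The delicate cases $c_k \equiv 2,3 \pmod 4$ (where $\Delta_{F_k} = 4c_k \neq c_k$ and $\delta_k = c_k$ is allowed a priori) are where I expect the technical heart of the proof to lie.
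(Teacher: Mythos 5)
Your proposal is incomplete, and the way it is set up makes the gap hard to close. You try to prove a statement stronger than what is needed — and stronger than what is true in general — namely that for \emph{every} finite nonempty $S \subseteq I$, $\alpha_S = \prod_{i\in S}\alpha_i \not\in K^{\times 2}\qq^\times$. After your Galois descent you arrive at a containment of the form $\delta_k \in D_{M_k}\langle c_k\rangle$ in $\qq^\times/\qq^{\times 2}$, but this is not a contradiction on its own: the hypotheses give no control on the prime support of $\delta_k$ relative to the Kummer generators $a_{i_j}, b_{i_j}$ of the \emph{other} biquadratic fields, so $\delta_k$ can perfectly well lie in that span. (Linear disjointness makes the $a_{i_j}, b_{i_j}$ jointly $\ff_2$-independent, but says nothing about $\delta_k$.) Your closing sentence, ``I expect to complete the argument by a careful prime-by-prime analysis$\ldots$,'' confirms that this step is not done; I do not believe it can be done unconditionally, because nothing rules out the coincidence. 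As a minor aside, the case $\delta_k = c_k$ that you flag as delicate is in fact never an issue: $c_k$ is already a square in $F_k = \qq(\sqrt{c_k})$, so $\delta_k = c_k$ would force $\gamma_k \in F_k^{\times 2}$, contrary to your choice of $F_k$.

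The paper sidesteps the obstruction entirely. It does not claim $\alpha_i\alpha_j \not\in K^{\times 2}\qq^\times$ for all pairs $i\neq j$; instead it recursively constructs a subsequence $i_0, i_1, \ldots$ of $I$ with that property. The crucial observation is a counting/pigeonhole argument: having chosen $i_0, \ldots, i_{k-1}$, the linear disjointness of the $K_i$ implies that for each $l<k$ there is \emph{at most one} remaining $i \in I$ for which $\Norm_{K_{i_l}/K_{i_l}'}(\alpha_{i_l}) \in (K_{i_l}'K_i)^{\times 2}$; since $I$ is infinite, a good $i_k$ can always be chosen. For that choice, a single application of \Cref{lem:quadratic-extension-units} to the norm down to $K_{i_l}'K_{i_k}$ yields $\alpha_{i_l}\alpha_{i_k} \not\in (K_{i_l}K_{i_k})^{\times 2}\qq^\times$, and \Cref{prop:quadratic-lin-disjoint} lifts this to $K$. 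This selection step is exactly what your proposal is missing, and it is what makes the argument go through without any arithmetic analysis of $\delta_k$.
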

\begin{proof}
For $i \in I$, fix $\alpha_i \in \tpu_{K_i} \setminus K_i^{\times 2}\qq^\times$.
By \Cref{cor:biquadratic-new-units} there exists a quadratic subfield $K_i'$ of $K_i$ with $\Norm_{K_i/K_i'}(\alpha_i) \not\in K_i'^{\times 2}$; we also fix these subfields from now on.

The proof strategy is as follows: we will recursively (for $k = 0, 1, \ldots$) choose $i_k \in I \setminus \lbrace i_0, \ldots, i_{k-1} \rbrace$ such that, for $l \in \lbrace 0, 1, \ldots, k-1 \rbrace$ we have $\alpha_{i_l}\alpha_{i_k} \not\in (K_{i_l}K_{i_k})^{\times 2} \qq^\times$.
In view of \Cref{prop:quadratic-lin-disjoint}, we then have $\alpha_{i_l}\alpha_{i_k} \not\in K^{\times 2} \qq^\times$, and since $I$ is infinite, this is enough to conclude that $\tpu_K/\mc{O}_K^{\times 2}$ is infinite.

Let $k \in \nat$ be such that $i_l$ has been appropriately chosen for $l < k$.
By the assumption that the $(K_i)_{i \in \nat}$ are linearly disjoint, there are at most $k$ elements $i \in I \setminus \lbrace i_0, \ldots, i_{k-1} \rbrace$ for which there exists $l \in \lbrace 0, \ldots, k-1 \rbrace$ with $\Norm_{K_{i_l}/K_{i_l}'}(\alpha_{i_l}) \in (K_{i_l}'K_i)^{\times 2}$.

Hence, we may choose $i_k \in I \setminus \lbrace i_0, \ldots, i_{k-1} \rbrace$ such that $\Norm_{K_{i_l}/K_{i_l}'}(\alpha_{i_l}) \not\in (K_{i_l}'K_{i_k})^{\times 2}$ for all $l < k$.
To conclude the proof, we show that for $l \in \lbrace 0, \ldots, k-1 \rbrace$, we have $\alpha_{i_l}\alpha_{i_k} \not\in (K_{i_l}K_{i_k})^{\times 2} \qq^\times$.
But this follows by \Cref{lem:quadratic-extension-units}, since
\begin{equation*}
\Norm_{K_{i_l}K_{i_k}/K_{i_l}'K_{i_k}}(\alpha_{i_l}\alpha_{i_k}) = \Norm_{K_{i_l}/K_{i_l}'}(\alpha_{i_l})\alpha_{i_k}^2 \not\in (K_{i_l}'K_{i_k})^{\times 2}. \qedhere
\end{equation*}
\end{proof}
By combining \Cref{prop:composita-units-multiquadratic} with the examples of biquadratic fields from \Cref{sect:biquadratic}, we obtain examples of infinite multiquadratic fields $K$ for which $\tpu_K / \mc{O}_K^{\times 2}$ is infinite.
In particular, we obtain the following announced result.
\begin{thm}\label{thm:Q2}
Let $K = \multi{2}$.
Then $\tpu_K / \mc{O}_K^{\times 2}$ is infinite.
In particular, there are no universal classical quadratic lattices over $K$.
\end{thm}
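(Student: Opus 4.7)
The plan is to apply \Cref{prop:composita-units-multiquadratic} to a carefully chosen linearly disjoint family of totally real Galois extensions of $\qq$ whose compositum is $\multi{2}$ and that contains infinitely many biquadratic fields of the type constructed in \Cref{prop:biquadratic-family}; the non-existence of universal classical quadratic lattices then follows from \Cref{prop:intro}.

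First I would verify an elementary auxiliary fact: there are infinitely many $n \in \nat$ with $n \equiv 1 \pmod{12}$ for which $d_1(n) = n(n+1)$, $d_2(n) = 3n(3n+4)$, and $d_3(n) = (3n+3)(3n+4)$ are simultaneously squarefree. This follows by a standard sieve on $n$ avoiding finitely many residue classes modulo $p^2$ for each small prime $p$; the excluded set inside the arithmetic progression $n \equiv 1 \pmod{12}$ has density strictly less than one.

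Second, I would recursively construct the linearly disjoint family $(K_i)_{i \in \nat}$, enumerating the primes $p_1 < p_2 < \ldots$ on the side. At each stage, given the current finite compositum $M = K_1 \cdots K_N$, I alternate two types of insertion. \emph{Biquadratic insertion:} pick an $n$ from the auxiliary set large enough that the three non-trivial square classes $d_1(n), d_2(n), d_3(n) \in \qq^\times/\qq^{\times 2}$ each lie outside the finite subgroup encoded by $M$; this is possible because $M$ contributes only finitely many square classes, whereas $d_j(n)$ acquires prime factors outside any prescribed finite set as $n \to \infty$. Adding $K_{N+1} = \qq(\sqrt{d_1(n)}, \sqrt{d_2(n)})$ preserves linear disjointness by the Kummer-theoretic dictionary between multiquadratic fields and subgroups of $\qq^\times/\qq^{\times 2}$, and by \Cref{prop:biquadratic-family} one has $\tpu_{K_{N+1}} \not\subseteq K_{N+1}^{\times 2}\qq^\times$. \emph{Quadratic insertion:} for the smallest index $j$ with $\sqrt{p_j}$ not yet in $M$, add $\qq(\sqrt{p_j})$ as the next field in the family. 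Alternating the two kinds of insertions produces a linearly disjoint family with infinitely many members of the good biquadratic type whose compositum eventually contains $\sqrt{p}$ for every prime $p$, hence equals $\multi{2}$.

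Finally, \Cref{prop:composita-units-multiquadratic} applied to this family, with $I$ the infinite set of indices of biquadratic insertions, gives that $\tpu_{\multi{2}} / \mc{O}_{\multi{2}}^{\times 2}$ is infinite; \Cref{prop:intro} then yields the non-existence of universal classical quadratic lattices over $\multi{2}$. The main obstacle I anticipate is organising the recursive construction in the second step so that linear disjointness is preserved throughout while simultaneously guaranteeing infinitely many biquadratic insertions and eventual coverage of all quadratic subfields of $\multi{2}$; the underlying square-class bookkeeping via Kummer theory is elementary but must be executed carefully.
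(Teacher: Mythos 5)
Your proposal is correct and follows essentially the same route as the paper: both deduce the theorem from \Cref{prop:composita-units-multiquadratic} applied to a linearly disjoint family built from the biquadratic fields of \Cref{prop:biquadratic-family}, together with \Cref{prop:units-no-classical} (equivalently \Cref{prop:intro}) for the non-existence of classical universal lattices. The only difference is organisational: you interleave single quadratic fields $\qq(\sqrt{p_j})$ and choose the parameters $n$ recursively so that linear disjointness and coverage of $\multi{2}$ are explicit, whereas the paper fixes a sequence $(n_i)$ up front and absorbs the remaining quadratic subfields into one maximal complementary factor $K_0$; your version spells out the Kummer-theoretic bookkeeping that the paper's argument leaves implicit.
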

\begin{proof}
Let $n_1, n_2, \ldots$ be an infinite sequence of distinct natural numbers such that for each $i \in \nat$, we have $n_i \equiv 1 \pmod{12}$, and $n_i(n_i + 1)$, $3n_i(3n_i+4)$, and $(3n_i+3)(3n_i+4)$ are square-free.
For $i \geq 1$, let $K_i = \qq(\sqrt{n_i(n_i+1)}, \sqrt{3n_i(3n_i+4)})$. Let $K_0$ be a maximal subfield of $K$ such that $K_0 \cap K_1\cdots K_m = \qq$ for any $m \in \nat$.
We have that $K$ is the compositum of the linearly disjoint family $(K_i)_{i \in \nat \cup \{ 0 \} }$.

Furthermore, for $i \geq 1$, we have by \Cref{prop:biquadratic-family} that $K_i$ is a biquadratic field with $\tpu_{K_i} \not\subseteq K_i^{\times 2}\qq^{\times}$.
We conclude by \Cref{prop:composita-units-multiquadratic} that $\tpu_K / \mc{O}_K^{\times 2}$ is infinite.

The second statement follows from the first by \Cref{prop:units-no-classical}.
\end{proof}

In \Cref{eg:pythagorean-closure} we gave an example of a totally real field with a universal quadratic form of rank $1$.
We can refine the construction to obtain more examples of totally real fields of infinite degree over $\qq$ with universal quadratic forms.
For a totally real field $K$, define
\begin{align*}
	R(K) &= \inf \lbrace n \in \nat \mid \text{ there exists a universal quadratic lattice of rank }n \text{ over } K \rbrace, \\
	\Rc(K) &= \inf \lbrace n \in \nat \mid \text{ there exists a classical universal quadratic lattice of rank }n \text{ over } K \rbrace.
\end{align*}
We clearly have $1 \leq R(K) \leq \Rc(K)$.

\begin{thm}\label{prop:universal-qf-of-degree-2power}
	Let $n \in \nat$.
	There exists a totally real field $K$ of infinite degree over $\qq$ with $\lvert K^{+, \times}/K^{\times 2} \rvert = \lvert \tpu_K / \mc{O}_K^{\times 2} \rvert = 2^n$.
	In particular, $\Rc(K) = 2^n$.
\end{thm}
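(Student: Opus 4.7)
The plan is to build $K$ as an infinite multiquadratic tower over a totally real number field $E$ chosen so that $\tpu_E/\mc{O}_E^{\times 2}$ contains a distinguished subgroup of order $2^n$, iteratively adjoining square roots to kill every totally positive class outside this subgroup while preserving the subgroup itself. Concretely, by \Cref{thm:Q2}, $\tpu_{\multi{2}}/\mc{O}_{\multi{2}}^{\times 2}$ is infinite (hence of infinite $\ff_2$-dimension), so it contains $n$ linearly independent classes over $\ff_2$. Picking representatives $q_1, \ldots, q_n$ in a finite totally real subfield $E \subseteq \multi{2}$ containing them, the classes of the $q_i$ remain linearly independent in $\tpu_E/\mc{O}_E^{\times 2}$ (because $\mc{O}_E^{\times 2} \subseteq \mc{O}_{\multi{2}}^{\times 2}$), yielding an $\ff_2$-subspace $Q$ of dimension $n$. (For $n = 0$, take $E = \qq$ and $Q = 0$.) I view $Q$ also as a subspace of $E^{+,\times}/E^{\times 2}$ via the natural injection.

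Now set $K_0 = E$ and, writing $V_i := K_i^{+,\times}/K_i^{\times 2}$, choose by Zorn's lemma an $\ff_2$-linear complement $A_i \leq V_i$ of the image of $Q$ in $V_i$. Picking a representative $\alpha \in K_i^{+,\times}$ for each nonzero class in $A_i$, set
\[
K_{i+1} = K_i\bigl(\sqrt{\alpha} : [\alpha] \in A_i\bigr), \qquad K = \bigcup_i K_i.
\]
Each $K_{i+1}$ is totally real since the $\alpha$ are totally positive, and $V_0$ is already infinite-dimensional over $\ff_2$ because the primes of $\mc{O}_E$ contribute infinitely many non-square classes; hence $A_0$ is infinite, and $K_1/E$ (and so $K/\qq$) has infinite degree.

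The key structural claim follows from Kummer theory: in any compositum of quadratic extensions $L = K_i(\sqrt{\alpha_j} : j \in J)$, an element of $K_i^\times$ becomes a square in $L$ precisely when its class in $K_i^\times/K_i^{\times 2}$ lies in the subgroup generated by the $[\alpha_j]$. Applied to $K_{i+1}/K_i$ with the classes running over $A_i$: since $A_i \cap Q = 0$, the subspace $Q$ embeds into $V_{i+1}$ (so has dimension $n$ by induction), and each $[\beta] \in V_i$, decomposing as $a + q$ with $a \in A_i$, $q \in Q$, collapses to $q$ in $V_{i+1}$. Taking the direct limit gives $V := K^{+,\times}/K^{\times 2} = Q$, of order $2^n$. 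Since $Q$ is generated by units in $\tpu_E \subseteq \tpu_K$, every class in $V$ has a unit representative, and combined with the injection $\tpu_K/\mc{O}_K^{\times 2} \hookrightarrow V$ this forces $|\tpu_K/\mc{O}_K^{\times 2}| = 2^n$ as well.

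Finally, $\Rc(K) \geq 2^n$ is immediate from \Cref{prop:units-no-classical}. For the matching upper bound I exhibit the diagonal form
\[
F(X_\epsilon : \epsilon \in \{0,1\}^n) = \sum_{\epsilon \in \{0,1\}^n} \Bigl(\prod_{i=1}^n q_i^{\epsilon_i}\Bigr) X_\epsilon^2,
\]
which is classical and positive definite because each coefficient is a totally positive unit in $\mc{O}_K$. Given any $\alpha \in \mc{O}_K^+$, the construction gives $\alpha = \bigl(\prod_i q_i^{\epsilon_i}\bigr) x^2$ for some $\epsilon$ and $x \in K^\times$; since $\prod_i q_i^{\epsilon_i}$ is a unit, integral closedness of $\mc{O}_K$ forces $x \in \mc{O}_K$, so $F$ represents $\alpha$ by setting $X_\epsilon = x$ and the other coordinates to $0$. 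The main technical obstacle throughout is ensuring that no nontrivial product of the $q_i$'s collapses into a square at any stage of the tower, which is precisely what the condition $A_i \cap Q = 0$ at every level guarantees.
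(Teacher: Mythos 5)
Your proof is correct and uses the same overall strategy as the paper's --- iteratively adjoin square roots of totally positive elements until $K^{+,\times}/K^{\times 2}$ is forced down to a prescribed $2^n$-element subgroup generated by totally positive units, then get the lower bound on $\Rc(K)$ from \Cref{prop:units-no-classical} and the upper bound from the $2^n$-dimensional diagonal form $\sum_{I\subseteq\{1,\ldots,n\}}(\prod_{i\in I}q_i)X_I^2$ --- but with one refinement worth highlighting. The paper's recursion defines $K_{i+1}$ by adjoining $\sqrt{\alpha}$ for \emph{all} totally positive $\alpha\in K_i$ whose square class lies outside $\Pi_{K_i}(e_1,\ldots,e_n)$; taken literally, this step would also kill $\Pi$, since the set-theoretic complement of a proper $\ff_2$-subspace always generates the whole space (if $[\alpha]\notin\Pi$ and $p\in\Pi$ then $[\alpha]+p\notin\Pi$, so $p=[\alpha]+([\alpha]+p)$ is a sum of classes outside $\Pi$), and hence by Kummer theory every $e_j$ would become a square in $K_{i+1}$. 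Your choice of an $\ff_2$-linear complement $A_i$ of the image of $Q$ in $V_i$, adjoining only those square roots, is exactly the careful version that makes the argument go through: the kernel of $V_i\to V_{i+1}$ is then precisely $A_i$, which meets $Q$ trivially, so $Q$ injects at every stage and into the direct limit. Your other deviations --- seeding the tower from a finite totally real $E\subseteq\multi{2}$ rather than from $\multi{2}$ itself (which then requires, and you supply, the separate observation that $E^{+,\times}/E^{\times 2}$ is infinite to conclude $[K:\qq]=\infty$), and spelling out the Kummer-theoretic kernel description --- are both sound and make the verification more transparent than the paper's `observe that, by construction'.
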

\begin{proof}
	For a field $L$ and $e_1, \ldots, e_n \in L^\times$, define $\Pi_L(e_1, \ldots, e_n)$ to be the subgroup of $L^\times/L^{\times 2}$ generated by $e_1L^{\times 2}, \ldots, e_nL^{\times 2}$.
	
	Let $K_0$ be any totally real field with $\lvert \tpu_{K_0} / \mc{O}_{K_0}^{\times 2} \rvert \geq 2^n$ (in view of \Cref{thm:Q2}, one can take for example $K_0 = \multi{2}$).
	Let $e_1, \ldots, e_n \in \tpu_{K_0}$ be such that their representatives in $\tpu_{K_0} / \mc{O}_{K_0}^{\times 2}$ are linearly independent, i.e.~$\lvert \Pi_{K_0}(e_1, \ldots, e_n) \rvert = 2^n$.
	Recursively, for $i \in \nat$, let $K_{i+1}$ be the smallest subfield of $\cc$ containing $K_i$ and the square roots of all totally positive elements $\alpha \in K_i \setminus \Pi_{K_i}(e_1, \ldots, e_n) K_i^{\times 2}$.
	Define $K = \bigcup_{i \in \nat} K_i$; we claim that this field is as desired.
	Firstly, $K$ is totally real, since the square root of a totally positive element is always totally real.
	Secondly, it is clear that $K_1$, and hence also $K$, is of infinite degree over $\qq$.
	
	Observe that, by construction, $$\Pi_{K}(e_1, \ldots, e_n) K^{\times 2} = K^{+, \times}\text{ and }\lvert \Pi_{K}(e_1, \ldots, e_n) \rvert = \lvert \tpu_K/\mc{O}_K^{\times 2} \rvert = 2^n.$$
	It follows by \Cref{prop:units-no-classical} that there exist no classical universal quadratic lattices over $K$ in less than $2^n$ variables.
	
	On the other hand, we have that the diagonal quadratic form in $2^n$ variables given by
	$$\mc{O}_K^{2^n}  \to \co_K^+ : (a_I)_{I \subseteq \lbrace 1, \ldots, n \rbrace} \mapsto \sum_{I \subseteq \lbrace 1, \ldots, n \rbrace} a_I^2 \left(\prod_{j \in I} e_j\right).$$
	is universal.
\end{proof}

\subsection{Questions}\label{sec:ques}
A finite extension of $\qq$ can never have a universal quadratic lattice of rank less than $3$ (for local reasons); the passage to infinite extension of $\qq$ thus `unlocks' the values $1$ and $2$ for $R(K)$ and $\Rc(K)$ by \Cref{prop:universal-qf-of-degree-2power}.
It is a wide open question which values the invariants $R$ and $\Rc$ can take for number fields. Using the approach of Kim--Kim--Park \cite{KP2} (and its refinement by Kala--Kr\' asensk\' y--Park--Yatsyna--\. Zmija, in preparation), one can establish that the values $3,4,5,6,7,8$ are attained as $R$ and $\Rc$ of real quadratic fields. However, nothing more is probably known about precise minimal ranks of universal lattices over number fields.

Already the preceding \Cref{prop:universal-qf-of-degree-2power} shows that we can obtain more information for infinite extensions; perhaps also the more general variation of this question will be easier to approach.
\begin{ques}
	For which integers $n$ does there exist a totally real field with $\Rc(K) = n$ (respectively $R(K) = n$)?
\end{ques}

Further, it is very interesting to consider generalization of \Cref{thm:Q2} to subfields of $\qq^{(2)}$.

\begin{ques}
	Does there exist a subfield $K$ of $\qq^{(2)}$ of infinite degree over $\qq$ for which $\tpu_K/\mc{O}_K^{\times 2}$ is finite?
\end{ques}
If a very strong version of Cohen--Lenstra heuristics holds, namely (cf. \cite[Remark 14]{SignatureRanks3})
\begin{itemize}
	\item there is a positive density of real multiquadratic fields $K$ for which $\tpu_K = \mc O_K^{\times 2}$, and
	\item the subfields $K_i$ of $K$ satisfies $\tpu_{K_i} = \mc O_{K_i}^{\times 2}$ with independent probabilities,
\end{itemize}
then the heuristics predict that infinite degree fields $K\subseteq\qq^{(2)}$ for which $\tpu_K = \mc O_K^{\times 2}$ exist (although they are exceedingly rare).

\section{Composita of odd degree Galois extensions}
The following can be seen as a generalization of \cite[Proposition 14]{SignatureRanks1}, and should be compared with \Cref{prop:composita-units-multiquadratic}.
\begin{prop}\label{prop:composita-units-multi-odd}
Let $(K_i)_{i \in \nat}$ be a linearly disjoint family of Galois extensions of $\qq$, and let $K$ be the compositum of $(K_i)_{i \in \nat}$.
Assume that there is an infinite subset $I \subseteq \nat$ such that, for all $i \in I$, $[K_i : \qq]$ is odd, and $\tpu_{K_i} \not\subseteq \mc{O}_{K_i}^{\times 2}$.
Then $\tpu_K/\mc{O}_K^{\times 2}$ is infinite.
\end{prop}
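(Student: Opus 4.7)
The plan is to exhibit infinitely many $\mathbb{F}_2$-linearly independent classes in $\tpu_K/\mc{O}_K^{\times 2}$ drawn from the subgroups $\tpu_{K_i}$ for $i \in I$. For each $i \in I$ I fix a representative $\alpha_i \in \tpu_{K_i} \setminus \mc{O}_{K_i}^{\times 2}$; since $\alpha_i$ is a totally positive unit in a totally real Galois number field, $\Norm_{K_i/\qq}(\alpha_i) = 1$. It then suffices to show that for every finite $J \subseteq I$ the classes $\{\alpha_j \mid j \in J\}$ are $\mathbb{F}_2$-independent in $\tpu_K/\mc{O}_K^{\times 2}$.

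The first step is an internal version inside $L_J := \prod_{j \in J} K_j$: I show by induction on $|J|$ that the $\alpha_j$ are $\mathbb{F}_2$-independent in $\tpu_{L_J}/\mc{O}_{L_J}^{\times 2}$. Given a putative relation $\prod_{j \in J} \alpha_j^{e_j} = \beta^2$ with $\beta \in \mc{O}_{L_J}^\times$ and any fixed $j_* \in J$, linear disjointness yields $[L_J : L_{J \setminus \{j_*\}}] = [K_{j_*} : \qq]$ (which is odd) and $\Gal(L_J/L_{J \setminus \{j_*\}}) \cong \Gal(K_{j_*}/\qq)$. Applying $\Norm_{L_J/L_{J \setminus \{j_*\}}}$ to the relation and using $\Norm_{K_{j_*}/\qq}(\alpha_{j_*}) = 1$ together with the fact that raising to an odd power preserves square-classes, the relation descends modulo squares to $\prod_{j \neq j_*} \alpha_j^{e_j} \in \mc{O}_{L_{J \setminus \{j_*\}}}^{\times 2}$, forcing $e_j = 0$ for $j \neq j_*$ by the inductive hypothesis. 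The original equation then reduces to $\alpha_{j_*}^{e_{j_*}} = \beta^2$, and \Cref{prop:totally-positive-odd-extension} applied to the odd-degree extension $L_J/K_{j_*}$ forces $e_{j_*} = 0$ as well.

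The second step bootstraps this to $K$. Suppose $\prod_{j \in J} \alpha_j^{e_j} = \beta^2$ with $\beta \in \mc{O}_K^\times$. Then $\beta$ lies in some finite subcompositum $M$ of $K$, which I enlarge so that $M = L_{J'} \cdot M''$ where $J \subseteq J' \subseteq I$ is finite and $M'' = \prod_{l \in J''} K_l$ for some finite $J'' \subseteq \nat \setminus I$. Linear disjointness gives $\Gal(M/L_{J'}) \cong \Gal(M''/\qq)$. Since $\beta^2 \in L_{J'}$, the map $\sigma \mapsto \sigma(\beta)/\beta$ defines a character $\chi : \Gal(M/L_{J'}) \to \{\pm 1\}$. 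If $\chi$ is trivial then $\beta \in L_{J'}$ and the key lemma applied to $J' \supseteq J$ gives $e_j = 0$ for all $j \in J$. Otherwise $\beta$ generates over $L_{J'}$ a quadratic subextension of $M/L_{J'}$, which by the Galois correspondence pulls back to a quadratic subfield $\qq(\sqrt{d}) \subseteq M''$ with $d \in \zz$ a non-square square-free integer, and a short calculation shows $\beta^2 \in d \cdot L_{J'}^{\times 2}$. Applying $\Norm_{L_{J'}/\qq}$ to the relation $\prod_{j \in J} \alpha_j^{e_j} \in d \cdot L_{J'}^{\times 2}$ then gives $1$ on the left (since $\Norm_{K_j/\qq}(\alpha_j) = 1$ for each $j \in J$) and $d^{[L_{J'}:\qq]}$ times a rational square on the right; since $[L_{J'}:\qq]$ is odd, $d$ would be a rational square, contradicting its definition.

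The main technical obstacle is step two: the hypothesis gives no control on $K_j$ for $j \notin I$, which can introduce even-degree extensions above $L_{J'}$, so a naive appeal to \Cref{prop:totally-positive-odd-extension} is insufficient. The Galois-descent combined with the norm computation --- crucially using both the oddness of $[L_{J'}:\qq]$ and that the absolute norm of any totally positive unit in a totally real Galois number field is $1$ --- is what bridges the odd-degree and possibly-even-degree parts of the ambient compositum $K$.
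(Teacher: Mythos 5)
Your proof is correct, and the engine driving it is the same as in the paper: you apply a norm $\Norm_{L/L'}$ (or $\Norm_{L/\qq}$) to a hypothetical relation, use the bijection $\Gal(L_J/L_{J\setminus\{j_*\}})\cong\Gal(K_{j_*}/\qq)$ coming from linear disjointness, use $\Norm_{K_j/\qq}(\alpha_j)=1$ (which holds because $\alpha_j$ is a totally positive unit), and use oddness of the degree to preserve square classes. However, you organize the argument differently and prove strictly more than the paper does. The paper fixes a single pair $i\neq j$ and shows $\alpha_i\alpha_j\notin (K_iK_j)^{\times 2}\qq^\times$ directly (so the $\alpha_i$ are merely pairwise inequivalent), and then lifts to $K$ by a single citation of \Cref{prop:quadratic-lin-disjoint}, which is a general Kummer-theoretic intersection statement. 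You instead prove full $\ff_2$-linear independence of $(\alpha_j)_{j\in J}$ for every finite $J\subseteq I$ by induction inside $L_J$, and then lift to $K$ by hand via the character $\chi:\Gal(M/L_{J'})\to\{\pm 1\}$, identifying a would-be quadratic obstruction $\qq(\sqrt d)\subseteq M''$ and killing it with another norm computation. Effectively, your ``Step 2'' re-derives the relevant special case of \Cref{prop:quadratic-lin-disjoint} from scratch, and your ``Step 1'' is a multi-variable induction in place of the paper's single-pair computation. Both work; the paper's version is shorter because it outsources the lift-to-$K$ step and only needs pairwise distinctness to get infinitude, while yours is more self-contained and yields the stronger linear-independence conclusion. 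One small citation imprecision worth noting: when you finish Step 1 by invoking \Cref{prop:totally-positive-odd-extension} to conclude $\alpha_{j_*}\in\mc{O}_{K_{j_*}}^{\times 2}$, what you actually use is the identity $\mc{O}_{L_J}^{\times 2}\cap K_{j_*}^\times=\mc{O}_{K_{j_*}}^{\times 2}$, which is the hypothesis (verified for odd-degree extensions) rather than the stated conclusion of that proposition; it is valid, just worth phrasing precisely.
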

\begin{proof}
For $i \in I$, fix $\alpha_i \in \tpu_{K_i}\setminus \mc{O}_{K_i}^{\times 2}$.
We will show that, for $i, j \in I$ with $i \neq j$, $\alpha_i\alpha_j \not\in (K_iK_j)^{\times 2}\qq^\times$.
In view of \Cref{prop:quadratic-lin-disjoint}, we then conclude that $\tpu_K/\mc{O}_K^{\times 2}$ is infinite.

So, pick $i, j \in I$ with $i \neq j$, and assume for the sake of arriving at a contradiction that $\alpha_{i}\alpha_{j} = \beta^2 q$ for some $q \in \qq^\times$ and $\beta \in K_{i}K_{j}$.
For an arbitrary $\sigma \in \Gal(K_{i}K_{j}/K_{j})$ we compute
$$ \sigma(\alpha_{i})\alpha_{j} = \sigma(\alpha_{i}\alpha_{j}) = q\sigma(\beta)^2.$$
So we have $\alpha_{i}\sigma(\alpha_{i})\alpha_j^2 = q^2\beta^2\sigma(\beta)^2 \in (K_{i}K_{j})^{\times 2}$, and thus $\alpha_{i}\sigma(\alpha_{i})\in (K_{i}K_{j})^{\times 2}$.
Recall that, since $K_{i}/\qq$ is Galois for each $i \in I$ and $K_{i} \cap K_j = \qq$, we have that the natural map $\Gal(K_{i}/\qq) \to \Gal(K_{i}K_{j}/K_{j})$ is a bijection.
Since $\lvert \Gal(K_i/\qq) \rvert = [K_i : \qq]$ is odd, we infer that
$$ 1 = \Norm_{K_i/\qq}(\alpha_i) = \prod_{\sigma \in \Gal(K_i/\qq)} \sigma(\alpha_i) \in \alpha_i (K_iK_j)^{\times 2} $$
whereby $\alpha_i \in (K_iK_j)^{\times 2}$.
But since $[K_iK_j : K_i] = [K_j : \qq]$ is odd, this is only possible if $\alpha_i \in K_i^{\times 2}$, which contradicts our assumption that $\alpha_i \not\in \mc{O}_{K_i}^{\times 2}$.
\end{proof}
The heuristics of \cite{CyclicCompositum} suggest that Galois number fields of odd degree are abundant.
In particular we obtain the following (largely contained in \cite[Proposition 14]{SignatureRanks1}).

\begin{thm}\label{thm:Q3}
Let $K = \qq^{(3d)}$ for $d \in \nat$ odd. Then $\tpu_K/\mc{O}_K^{\times 2}$ is infinite. In particular, there are no universal classical quadratic lattices over $K$.
\end{thm}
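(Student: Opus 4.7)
The plan is to apply \Cref{prop:composita-units-multi-odd} to a family of cyclic cubic fields contained in $\multi{3d}$, and then transfer the conclusion from the compositum of that family up to all of $\multi{3d}$ via \Cref{prop:totally-positive-odd-extension}. The key input is the existence of infinitely many linearly disjoint totally real cyclic cubic fields $L$ with $\tpu_L \not\subseteq \mc{O}_L^{\times 2}$.

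For the input, first observe that for any totally real cyclic cubic $L$, the image of $\sgn\colon \mc{O}_L^\times \to \{\pm 1\}^3$ is $\Gal(L/\qq)$-invariant (under cyclic permutation of coordinates) and contains $(-1,-1,-1)$. An enumeration of the $\zz/3\zz$-invariant subgroups of $(\zz/2\zz)^3$ containing this element shows that only those of order $2$ and order $8$ qualify, whence $|\tpu_L/\mc{O}_L^{\times 2}| \in \{4,1\}$. Following \cite[Proposition 14]{SignatureRanks1}, one then produces an infinite linearly disjoint sequence $(L_i)_{i\in\nat}$ of cyclic cubics falling in the former case.

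Next, each $L_i$ embeds into $\multi{3d}$: pick any totally real abelian extension $M_i/\qq$ of degree $d$ that is linearly disjoint from $L_i$ (for instance, a cyclic subfield of a sufficiently ramified cyclotomic extension). Then $L_iM_i/\qq$ is Galois, totally real, and of degree exactly $3d$, so $L_iM_i \subseteq \multi{3d}$ and hence $L_i \subseteq \multi{3d}$. Letting $F \subseteq \multi{3d}$ be the compositum of the $L_i$'s, \Cref{prop:composita-units-multi-odd} applied to the family $(L_i)$ yields that $|\tpu_F/\mc{O}_F^{\times 2}|$ is infinite. To transfer to $K = \multi{3d}$, note that any finite subextension of $K/\qq$ is contained in a compositum of finitely many Galois extensions of degree $3d$, hence has degree dividing a power of $3d$ and is therefore odd; in particular, every finite subextension of $K/F$ is of odd degree, and \Cref{prop:totally-positive-odd-extension} gives $|\tpu_K/\mc{O}_K^{\times 2}| \geq |\tpu_F/\mc{O}_F^{\times 2}| = \infty$. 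The non-existence of universal classical quadratic lattices is then a direct consequence of \Cref{prop:units-no-classical}.

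The main obstacle is establishing the infinite linearly disjoint family of cyclic cubic fields with non-trivial $\tpu/\mc{O}^{\times 2}$; this is precisely the content of \cite[Proposition 14]{SignatureRanks1}. The remaining steps — embedding cyclic cubics into $\multi{3d}$ for arbitrary odd $d$ via compositing with an auxiliary cyclic extension, and the pro-odd transfer from $F$ to $\multi{3d}$ — are routine once the right propositions from the preceding sections are in place.
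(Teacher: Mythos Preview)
Your proof is correct and follows essentially the same route as the paper's: both obtain infinitely many linearly disjoint cyclic cubics with non-trivial $\tpu/\mc{O}^{\times 2}$ (the paper cites \cite[Theorem 1.1.2]{CyclicCompositum}, you cite \cite[Proposition 14]{SignatureRanks1}), apply \Cref{prop:composita-units-multi-odd} to their compositum, and then transfer to $\multi{3d}$ via \Cref{prop:totally-positive-odd-extension}. You additionally spell out why each cyclic cubic embeds in $\multi{3d}$ (by compositing with an auxiliary totally real abelian extension of degree $d$), a detail the paper leaves implicit.
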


\begin{proof}
By \cite[Theorem 1.1.2]{CyclicCompositum} there exist infinitely many cyclic cubic extensions with a totally positive non-square unit.
Let $K_0$ be the compositum of these infinitely many cyclic cubic extensions.
In view of \Cref{prop:composita-units-multi-odd} we have that $\tpu_{K_0}/\mc{O}_{K_0}^{\times 2}$ is infinite.

We have that $K/K_0$ is a limit of odd degree extensions, so the fact that $\tpu_K/\mc{O}_K^{\times 2}$ is infinite follows from the fact that $\tpu_{K_0}/\mc{O}_{K_0}^{\times 2}$ is infinite by \Cref{prop:totally-positive-odd-extension}.

The second statement follows from the first one by \Cref{prop:units-no-classical}.
\end{proof}

\printbibliography

\end{document}